 \newtheorem{thm}{Theorem}[section]
 \newtheorem{cor}[thm]{Corollary}
 \newtheorem{lem}[thm]{Lemma}
 \newtheorem{prop}[thm]{Proposition}
 \theoremstyle{definition}
 \theoremstyle{remark}
 \newtheorem{rem}[thm]{Remark}
 \newtheorem{ex}[thm]{Example}
 \numberwithin{equation}{section}
  \newcommand{\textdef}[1]{\textit{#1}\index{#1}}
\newcommand{\NN}{{\mathbb N} }
\newcommand{\Rn}{{\mathbb R}^n }
\newcommand{\Sn}{{\mathbb S}^n }
\newcommand{\Snp}{{\mathbb S}^n_+ }
\newcommand{\Snpp}{{\mathbb S}^n_{++} }
\newcommand{\Snt}{{\mathbb S}^n_T }
\newcommand{\R}{{\mathbb R} }
\newcommand{\Tstar}{{T^{\star}} }
\newcommand{\SDP}{{\textbf SDP} }
\newcommand{\Bez}{{\rm Bez} }
\newcommand{\MM}{\mathcal{M} }
\newcommand{\TT}{\mathcal{T} }
\newcommand{\Ss}{\mathcal{S} }
\newcommand{\FF}{\mathcal{F} }
\newcommand{\A}{\mathcal{A} }
\DeclareMathOperator{\rank}{{rank}}
\DeclareMathOperator{\range}{{range}}
\DeclareMathOperator{\sd}{{sd}}
\begin{document}

%
%
%
%
%
%
%
%
%

\title[Maximum determinant positive definite Toeplitz
completions]{Maximum determinant positive definite\\ Toeplitz
completions}

\author[Sremac]{Stefan Sremac}\thanks{Research supported by The Natural Sciences and Engineering Research Council of Canada.}
\address{%
Department of Combinatorics and Optimization,\\
Faculty of Mathematics, University of Waterloo, \\
200 University Ave. W,\\
Waterloo, Ontario, \\
Canada N2L 3G1}
\email{ssremac@uwaterloo.ca}

\author[Woerdeman]{Hugo J. Woerdeman}\thanks{Research supported by Simons Foundation grant 355645.}
\address{Department of Mathematics\\ Drexel University \\ 
3141 Chestnut Street, \\
Philadelphia, PA 19104, \\
USA}
\email{hugo@math.drexel.edu}

\author[Wolkowicz]{Henry Wolkowicz}\thanks{Research supported by The Natural Sciences and Engineering Research Council of Canada.}
\address{%
Department of Combinatorics and Optimization,\\
Faculty of Mathematics, University of Waterloo, \\
200 University Ave. W,\\
Waterloo, Ontario, \\
Canada N2L 3G1}
\email{hwolkowi@uwaterloo.ca}
\subjclass{15A60, 15A83, 15B05, 90C22}

\keywords{matrix completion, Toeplitz matrix, positive definite completion, maximum determinant}

\date{\today}
\dedicatory{Dedicated to our friend Rien Kaashoek in celebration of his eightieth birthday.}

\begin{abstract}
We consider partial symmetric Toeplitz matrices where a positive definite
completion exists. We characterize those patterns where the maximum 
determinant completion is itself Toeplitz. We then extend these results with
positive definite replaced by positive semidefinite, and maximum
determinant replaced by maximum rank.
These results are used to determine the singularity degree of a family of
semidefinite optimization problems.
\end{abstract}

\maketitle
\section{Introduction}
In this paper we study the positive definite completion of a \textdef{partial
symmetric Toeplitz matrix, $\TT$}. 
The main contribution is Theorem \ref{thm:main}, where we present a characterization of 
those Toeplitz patterns for which the maximum determinant completion is 
Toeplitz, whenever the partial matrix is positive definite completable.
Part of this result answers a conjecture about the existence of a 
positive Toeplitz completion with a specific pattern.
A consequence of the main result is an extension to the maximum rank
completion in the positive
semidefinite case, and an application to the \emph{singularity degree}
of a family of \emph{semidefinite programs (SDPs)}.  In the following paragraphs we introduce relevant background information, state the main result, and motivate our pursuit. 
\index{$\TT$, partial symmetric Toeplitz matrix}

A \textdef{partial matrix} is a matrix in which some of the entries are assigned values while others are unspecified treated as variables.  For instance, 
\begin{equation}
\label{eq:M}
\MM:=\begin{bmatrix} 6 & 1 & x & 1 & 1 \cr 
1 & 6 & 1 & y & 1 \cr 
u & 1 & 6 & 1 & z \cr 
1 & v & 1 & 6 & 1 \cr 
1 & 1 & w & 1 & 6 \end{bmatrix}
\end{equation}
is a real partial matrix, where the unspecified entries are indicated by
letters.  A \textit{completion} of a partial matrix $\TT$ is obtained by
assigning values to the unspecified entries.  In other words, a matrix
$T$ (completely specified) is a completion of $\TT$ if it coincides with
$\TT$ over the specified entries: $T_{ij} = \TT_{ij}$, whenever $\TT_{ij}$ is
specified.  A \textdef{matrix completion problem} is to determine whether the partial matrix can be completed so as to satisfy a desired
property?  This type of problem has enjoyed considerable attention in
the literature due to applications in numerous areas,
e.g.,~\cite{MR2807419,Rechtparrilofazel}. For example
this is used in sensor network localization
\cite{kriswolk:09,laur:97b}, where the property is that the 
completion is a Euclidean
distance matrix with a given embedding dimension.
Related references for matrix completion problems
are e.g.,~\cite{MR1321785,MR2014037,MR2565240,MR2279160,MR1823516}.

The \text{pattern} of a partial matrix is the set of specified entries.
For example, the pattern of $\MM$ is all of the elements in diagonals
$-4,-3,-1,0,1,3,4$.  Whether a partial matrix is positive definite completable to some
property may depend on the values assigned to specified entries (the
data) and it may also depend on the pattern of specified entries.  A
question pursued throughout the literature is whether there exist
patterns admitting completions whenever the data satisfy some
assumptions.  Consider, for instance, the property of positive
definiteness.  A necessary condition for a partial matrix to have a
positive definite completion is that all completely specified
principal submatrices are positive definite.  We refer to such partial
matrices as \textdef{partially positive definite}.  Now we ask: what are
the patterns for which a positive definite completion exists whenever a
partial matrix having the pattern is partially positive definite?  In
~\cite{GrJoSaWo:84} the set of such patterns is shown to be fully
characterized by \emph{\textdef{chordal}ity of the graph} of the matrix.

In this work the desired property is \emph{symmetric Toeplitz positive
definite}. In particular, we consider the completion with maximum
determinant over all positive definite completions.  Recall that a real
symmetric $n\times n$ matrix $T$ is Toeplitz if there exist real numbers
$t_0,\dotso,t_{n-1}$ such that $T_{ij} = t_{\lvert i-j\rvert}$ for all
$i,j \in \{1,\dotso,n\}$.  A partial matrix is said to be
\textdef{partially symmetric Toeplitz} if the specified entries are symmetric and consist
of entire diagonals where the data is constant over each diagonal.  The
pattern of such a matrix indicates which diagonals are known and hence
is a subset of $\{0,\dotso,n-1\}$.  Here $0$ refers to the main
diagonal, $1$ refers to the super diagonal and so on.  The subdiagonals
need not be specified in the pattern since they are implied by symmetry.
In fact, since positive definite completions are trivial when the main
diagonal is not specified (and the determinant is unbounded), we assume
throughout that the main diagonal is specified. We therefore only consider 
patterns of increasing integers in the set $\{1,\dotso,n-1\}$.  The pattern of $\MM$, for instance, 
is $\{1,3,4\}$.  
\index{$T$, Toeplitz}
\index{Toeplitz, $T$}

For a partial matrix $\TT$ with pattern $P$ and $k \in P$, we let $t_k$ denote the value of $\TT$ on diagonal $k$ and we refer to $\{t_k : k \in P \cup \{0\} \}$
as the \textit{data} of $\TT$.  For $\MM$ the data is 
$\{t_0,t_1,t_3,t_4\}= \{6,1,1,1\}$.

We say that a partial Toeplitz matrix $\TT$ is \textdef{positive
(semi)definite completable} if there exists a positive (semi)definite completion of $\TT$.  In this case we denote by 
$\Tstar$, the unique positive definite completion of $\TT$ that maximizes the 
determinant over all positive definite completions.  We now state the
main contribution of this paper, a characterization of the Toeplitz patterns
where the maximum determinant completion is itself Toeplitz, whenever the
partial matrix is positive definite completable.
\index{maximum determinant positive definite completion, $\Tstar$}
\index{$\Tstar$, maximum determinant positive definite completion}
\begin{thm}
\label{thm:main}
Let $\emptyset \ne \textdef{$P \subseteq \{1,\dotso,n-1\}$}$ denote a
pattern of increasing integers.  The following are equivalent. 
\begin{enumerate}
\item 
\label{item:parttoep}
Let $\TT$ be a partial Toeplitz matrix have pattern P, and let $\TT$ 
be positive definite completable. Then $\Tstar$ is Toeplitz.
\item
\label{item:Prnk}
There exist $r,k \in \NN$ such that $P$ has one of the three forms:
\begin{itemize}
\item \textdef{$P_1 := \{k,2k,\dotso,rk\}$},
\item \textdef{$P_2 := \{k,2k,\dotso,(r-2)k,rk\}$, where $n = (r+1)k$},
\item \textdef{$P_3 := \{k,n-k\}$}.
\end{itemize}
\end{enumerate}
\end{thm}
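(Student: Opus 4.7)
The proof rests on the variational characterization of $\Tstar$: it is the unique positive definite completion of $\TT$ whose inverse $(\Tstar)^{-1}$ vanishes at every unspecified position. Equivalently, $\Tstar$ is Toeplitz if and only if there is \emph{some} Toeplitz positive definite completion $T$ of $\TT$ with $T^{-1}$ supported on the diagonals $\pm(P\cup\{0\})$; I will use this reformulation throughout, since it recasts the theorem as a question of existence of Toeplitz extensions with prescribed inverse-sparsity.

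For $(\ref{item:Prnk})\Rightarrow(\ref{item:parttoep})$ I would treat each pattern family separately. For $P_1=\{k,2k,\ldots,rk\}$ I would exploit the diagonal unitary $D=\mathrm{diag}(1,\omega,\ldots,\omega^{n-1})$ with $\omega=e^{2\pi i/k}$. Since $P\cup\{0\}\subseteq k\mathbb{Z}$, conjugation by $D$ fixes every specified entry of $\TT$, so uniqueness of the max-det completion (applied in the complex Hermitian setting, whose maximizer is automatically real) forces $D\Tstar D^{-1}=\Tstar$ and hence $\Tstar_{ij}=0$ whenever $k\nmid(i-j)$. Regrouping indices by residue modulo $k$ renders $\Tstar$ block-diagonal, and each block is the max-det completion of a band Toeplitz matrix of bandwidth $r$; by the Dym--Gohberg band-extension theorem each block is Toeplitz, and its extension entries are generated by a maximum-entropy (vanishing reflection-coefficient) recurrence depending only on the band data and not on block size, so the blocks fit together into a globally Toeplitz $\Tstar$. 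For $P_2$ the same $D$-argument kills off-multiple-of-$k$ diagonals; the hypothesis $n=(r+1)k$ makes all blocks of the uniform size $r+1$ with only the diagonal $r-1$ (of length two) unspecified within each block, and persymmetry equates its two entries. For $P_3=\{k,n-k\}$ the diagonal-unitary symmetry collapses and a direct construction is required: one writes down a Toeplitz candidate whose symbol satisfies an explicit polynomial identity forcing $T^{-1}$ to vanish outside $\{0,\pm k,\pm(n-k)\}$, using the cycle structure of the pattern graph (edges of the two distances $k$ and $n-k$ form a single cycle on $\{1,\ldots,n\}$) to match the free parameters with the prescribed data.

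For the converse $(\ref{item:parttoep})\Rightarrow(\ref{item:Prnk})$ I would argue by contrapositive. Setting $k=\min P$, a first step establishes $P\subseteq k\mathbb{Z}$: otherwise a small principal example admits a max-det completion whose filled diagonal off $k\mathbb{Z}$ takes distinct values. After rescaling to $k=1$ I would enumerate subsets of $\{1,\ldots,n-1\}$ and rule out every shape outside the three listed families by an explicit counterexample, controlling the filled values in closed form via the Szeg\H{o} reflection-coefficient recurrence while simultaneously maintaining positive definiteness of $\TT$. The main obstacle is two-fold: first, this enumeration step demands separate constructions for ``interior-gap,'' ``long-tail,'' and ``symmetric-pair'' shapes, each of which requires a delicate choice of data so that the computed extension values disagree on a diagonal the ansatz forces to be constant; and second, the $P_3$ subcase of the forward direction does not admit the clean symmetry reduction of $P_1$ and $P_2$ and instead requires a concrete construction of the Toeplitz completion that I expect to be the most delicate step in the whole argument.
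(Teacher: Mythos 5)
Your forward direction for $P_1$ and $P_2$ is essentially sound, and the root-of-unity conjugation $T\mapsto DTD^\ast$ is a nice alternative to the paper's explicit permutation into block-diagonal form; note, though, that after the reduction you are invoking exactly the same external inputs the paper does (the Dym--Gohberg band-extension theorem for each block, the permanence principle to make the filled values agree across blocks of sizes $p$ and $p+1$, and persymmetry, i.e.\ Lemma~\ref{lem:K}, to handle the one missing diagonal in the $P_2$ blocks). The genuine gap in this implication is $P_3$, which you yourself flag as "the most delicate step" and then leave as a sketch. Saying one "writes down a Toeplitz candidate whose symbol satisfies an explicit polynomial identity \ldots to match the free parameters with the prescribed data" is a restatement of what must be proved: that \emph{every} positive definite completable triple $(t_0,t_k,t_{n-k})$ is realized as the $(1,1)$, $(k+1,1)$, $(n-k+1,1)$ data of the inverse of a Toeplitz Bezoutian $\Bez(p,q,r)$ built from a polynomial $p+qz^k+rz^{n-k}$ with all roots outside the unit disk (the support condition on the inverse uses the cancellation of the cross term $J^k(J^T)^{n-k}$, which is precisely why the pair $\{k,n-k\}$ is special). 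The paper obtains this surjectivity by a topological argument: the completable data set $\mathcal{O}$ is open and convex, hence connected; the set $\mathcal{U}$ of data with Toeplitz max-determinant completion is nonempty, closed in $\mathcal{O}$ by continuity of the max-determinant map, and open because it is the image of a continuous map with continuous inverse (via Gohberg--Semencul) defined on the open Schur--Cohn region; hence $\mathcal{U}=\mathcal{O}$. Nothing in your proposal substitutes for this step.

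The converse as you outline it does not work. Your proposed first step, that $k=\min P$ must divide every element of $P$, is false as a necessary condition: $P_3=\{k,n-k\}$ is one of the admissible patterns and in general $k\nmid n-k$, so a contrapositive built on "$P\not\subseteq k\NN$ implies failure of (\ref{item:parttoep})" would wrongly exclude $P_3$. Moreover, "enumerate subsets of $\{1,\dotso,n-1\}$ and rule out every shape outside the three families by an explicit counterexample" is not an argument for arbitrary $n$: there is no finite enumeration, you give no uniform construction for your "interior-gap," "long-tail," and "symmetric-pair" shapes, and controlling the filled values via a Szeg\H{o}-type recurrence in closed form for a general non-banded pattern is exactly the difficulty, not a tool. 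The paper's converse avoids enumeration altogether: assuming $\Tstar$ is Toeplitz, its inverse is supported on the diagonals $P\cup\{0\}$ by Theorem~\ref{thm:maxdet} and is, by Gohberg--Semencul, a combination of terms $J^{k_j}(J^T)^{k_l}-J^{n-k_j}(J^T)^{n-k_l}$; after a generic perturbation of the data these cross terms cannot cancel one another, so for every pair $j<l$ either $k_l-k_j\in P$ or $k_j+k_l=n$, and a short combinatorial argument then forces $P$ to be of the form $P_1$, $P_2$, or $P_3$. As it stands, your proposal establishes only the $P_1$/$P_2$ half of one implication; the $P_3$ case and the entire converse are missing.
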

The proof of Theorem \ref{thm:main} is presented in Section~\ref{sec:main}.  Note that for the 
partial Toeplitz matrix $\MM$ in \eqref{eq:M}, we can
set all the unspecified entries to $1$ and obtain a positive definite
completion. However, the maximum determinant completion is given, to four
decimal accuracy, when $x=z=u=w=0.3113$ and $y=v=0.4247$. But, 
this completion is \emph{not}
Toeplitz.  Indeed, the pattern of $\MM$ is not among the patterns of Theorem~\ref{thm:main}.

Positive definite Toeplitz matrices play an important role throughout
the mathematical sciences.  Correlation matrices of data arising from
time series,~\cite{MR941464}, and solutions to the trigonometric moment
problem,~\cite{MR1709182}, are two such examples.  
Among the early contributions to this area is the following sufficient
condition and characterization, for a special case of pattern $P_1$.
\begin{thm}[\!{\cite{DymGoh:81}}] 
\label{thm:banded}
If $\TT$ is a partially positive definite Toeplitz matrix with pattern
$P_1$ and $k=1$, then $\Tstar$ exists and is Toeplitz.
\end{thm}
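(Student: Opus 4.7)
The plan is to leverage uniqueness of the maximum determinant PD completion. Whenever a PD completion exists, $\Tstar$ is unique and is characterized by the KKT/Lagrangian condition that its inverse vanishes at every unspecified position, i.e.\ $(\Tstar)^{-1}_{ij}=0$ whenever $|i-j|>r$. In other words, $(\Tstar)^{-1}$ must be banded of bandwidth $r$. Existence of a PD completion is automatic in this setting because the pattern graph is chordal (it is a banded graph), so the characterization of \cite{GrJoSaWo:84} applies to the partially PD $\TT$.

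With this characterization in hand, the strategy reduces to exhibiting a single PD Toeplitz completion $T$ of $\TT$ for which $T^{-1}$ is banded of bandwidth $r$; uniqueness of $\Tstar$ then forces $T=\Tstar$, and hence $\Tstar$ is itself Toeplitz.

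To construct $T$, I would start from the fully specified $(r+1)\times(r+1)$ top-left principal block of $\TT$, namely $T^{(r)}=[t_{|i-j|}]_{i,j=0}^{r}$, which is Toeplitz and PD as a fully specified principal submatrix of the partially PD $\TT$. Solve the Yule--Walker system
$$T^{(r)}\,(1,a_1,\dots,a_r)^{T}=(\sigma,0,\dots,0)^{T}$$
to obtain coefficients $a_1,\dots,a_r$ and a positive prediction error $\sigma>0$, and then extend the Toeplitz data by the recursion
$$t_s=-\sum_{j=1}^{r}a_j\,t_{s-j},\qquad s=r+1,\dots,n-1.$$
Setting $T=[t_{|i-j|}]_{i,j=1}^{n}$ produces a Toeplitz candidate completion; this is the Burg/maximum-entropy extension of the given autocorrelations.

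The main obstacle is verifying that the matrix $T$ so constructed is (i) positive definite and (ii) has an inverse that is banded of bandwidth $r$. Both are classical consequences of the Szeg\H{o}/Levinson orthogonal-polynomial machinery. Positive definiteness is equivalent to Schur stability of the prediction polynomial $A(z)=1+\sum_{j=1}^{r}a_jz^{j}$, which is ensured by $T^{(r)}\succ 0$; the bandedness of $T^{-1}$ then follows from an explicit factorization of $\sigma\,T^{-1}$ in terms of banded triangular Toeplitz matrices built from the coefficients $(1,a_1,\dots,a_r)$, whose support immediately yields a symmetric band of bandwidth at most $r$. Uniqueness of $\Tstar$ then identifies $T$ with $\Tstar$ and closes the argument.
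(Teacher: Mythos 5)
The paper does not prove Theorem~\ref{thm:banded} at all; it is imported from \cite{DymGoh:81}, so there is no internal proof to compare against. Your argument is a correct, essentially classical route to the result, and it is in the same spirit as what the paper later does for pattern $P_3$ in Proposition~\ref{prop:P3}, namely combining the maximum-determinant characterization (Theorem~\ref{thm:maxdet}) with the Schur--Cohn/Gohberg--Semencul machinery; by contrast the paper's treatment of $P_1$ for general $k$ (Proposition~\ref{prop:P1P2}) reduces to the $k=1$ case by a permutation and then simply invokes Theorem~\ref{thm:banded}. Two spots in your sketch need care when written out. First, the stability statement should be phrased in the convention matching $A(z)=1+\sum_{j=1}^r a_j z^j$: positive definiteness of $T^{(r)}$ gives, via Levinson--Szeg\H{o} theory, that all roots of $A$ lie strictly \emph{outside} the closed unit disk (the convention of Theorem~\ref{thm:schurcohn}), which is what makes the autoregressive extension well defined and its spectral density $\sigma/\lvert A(e^{i\theta})\rvert^2$ positive. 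Second, the banded-inverse step should not be justified by invoking the Gohberg--Semencul formula for $T$ itself, since that formula takes the first column of $T^{-1}$ as input, which is what you are trying to pin down; the clean way is to check directly from the Yule--Walker rows and your recursion that $T\,(1,a_1,\dots,a_r,0,\dots,0)^T=\sigma e_1$, and then either conclude that the first column of $T^{-1}$ is supported in its first $r+1$ entries (so Gohberg--Semencul exhibits the band), or verify outright that $T\cdot\frac{1}{\sigma}\bigl(A_nA_n^T-B_n^TB_n\bigr)=I$ with $A_n=\sum_{j=0}^r a_jJ^j$, $B_n=\sum_{j=1}^r a_j(J^T)^{n-j}$, which simultaneously gives positive definiteness of $T$ (its inverse is the Schur--Cohn Bezoutian, positive definite by the root condition) and the bandwidth-$r$ band structure of $T^{-1}$. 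With these details supplied, Theorem~\ref{thm:maxdet} and uniqueness of $\Tstar$ force $\Tstar=T$, exactly as you say; note also that your construction itself furnishes a positive definite completion, so the appeal to chordality, while correct, is not even needed for existence.
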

\begin{thm}[\!{\cite[Theorem~1.1]{MR1709182}}]
\label{thm:jonlunnav}
A partially positive definite Toeplitz matrix is positive definite 
Toeplitz completable if, and only if, it has a pattern of the form $P_1$.
\end{thm}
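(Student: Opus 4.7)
I would separate the two implications and use the classical Herglotz correspondence throughout: a real symmetric $n \times n$ Toeplitz matrix with first row $t_0, t_1, \dots, t_{n-1}$ is positive semidefinite if and only if there is a positive Borel measure $\mu$ on $[-\pi,\pi)$ with $t_j = \int e^{-ij\theta}\, d\mu(\theta)$, and positive definite exactly when $\mu$ has at least $n$ support points. Under this translation, positive definite Toeplitz completability of a partial Toeplitz matrix with pattern $P$ becomes the existence of a positive measure $\mu$ with the prescribed Fourier coefficients at indices in $P \cup \{0\}$ and at least $n$ support points.

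For the \emph{if} direction, assume $P = P_1 = \{k, 2k, \dots, rk\}$. The principal submatrix of $\TT$ on indices $1, 1+k, \dots, 1+rk$ is fully specified (an $(r+1) \times (r+1)$ positive definite Toeplitz matrix with entries $t_0, t_k, \dots, t_{rk}$), so the classical Carath\'eodory--Toeplitz theorem yields a positive measure $\tilde\mu$ on the circle with $\widehat{\tilde\mu}(j) = t_{jk}$ for $|j| \leq r$, and we may take $\tilde\mu$ to have arbitrarily many support points. I then form the $k$-fold rotation-invariant ``lift'' of $\tilde\mu$: partition the circle into $k$ arcs of equal length and place $\frac{1}{k}$ of a suitably rescaled copy of $\tilde\mu$ on each arc to obtain a positive $\mu$ that is invariant under rotation by $2\pi/k$. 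A short Fourier check shows $\widehat{\mu}(m) = t_m$ whenever $k \mid m$ and $|m|/k \leq r$, while $\widehat{\mu}(m) = 0$ whenever $k \nmid m$. Choosing $\tilde\mu$ with at least $\lceil n/k \rceil$ support points gives $\mu$ at least $n$ support points, so the associated $n \times n$ Toeplitz matrix is a positive definite completion of $\TT$.

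For the \emph{only if} direction, suppose $P$ is not of the form $P_1$ for any $k, r$. Setting $k := \gcd(P)$ and $Q := P/k$, one has $\gcd(Q) = 1$ but $Q$ is not an initial segment $\{1, 2, \dots, r\}$. I would construct partially positive definite data with pattern $P$ admitting no positive definite Toeplitz completion by working on the boundary of the feasible moment cone: choose $t_0$ and $t_{jk}$ for $jk \in P$ so that the truncated trigonometric moment problem on $Q$ admits a unique positive representing measure $\tilde\mu$ of very small support (a ``flat-extension'' boundary point). Then any positive Toeplitz matrix matching the specified data would have to come from a $k$-fold symmetric lift of such a $\tilde\mu$, so arranging $\tilde\mu$ to have fewer than $\lceil n/k \rceil$ support points forces the lifted Toeplitz matrix to be singular, ruling out any positive definite Toeplitz completion.

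The main obstacle is this only-if direction, specifically producing the required rigid boundary measure for an arbitrary non-initial $Q$. I would first treat small cases such as $Q = \{1,3\}$, $Q = \{2,3\}$, $Q = \{1,2,4\}$ at appropriately chosen $n$ in order to identify the correct rigidity principle for the truncated moment problem on a non-initial index set, and then attempt an induction on $\max(Q)$ combined with the $k$-fold lift reduction above to cover every non-initial $Q$. The delicate point is guaranteeing that the rigid reduced data lifts to genuinely partially positive definite data at the full $n \times n$ level (so that the hypothesis of the theorem is met) while still preventing any Toeplitz extension, which constrains both the support structure of $\tilde\mu$ and the ambient dimension $n$ simultaneously.
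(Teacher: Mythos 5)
Since the paper does not prove this statement (it is quoted from \cite{MR1709182}), your argument has to stand on its own. The ``if'' direction of your proposal is essentially sound: the fully specified $(r+1)\times(r+1)$ principal block $(t_{|i-j|k})_{i,j=0}^{r}$ is positive definite, the Carath\'eodory--Toeplitz/Herglotz theorem provides a (symmetric) representing measure $\tilde\mu$ with arbitrarily many support points, and the $k$-fold lift (the pullback under $z\mapsto z^k$) has Fourier coefficients $t_{jk}$ at multiples of $k$, zero at the other indices, and $k\,|\mathrm{supp}\,\tilde\mu|\ge n$ support points, hence gives a positive definite Toeplitz completion. (For comparison, the sufficiency mechanism used in this paper for the related Proposition~\ref{prop:P1P2} is a permutation of $\TT$ into a direct sum of banded Toeplitz blocks together with the band result of \cite{DymGoh:81}, rather than measures; both routes work.)

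The genuine gap is the ``only if'' direction, which is the substantive half of the theorem and which you leave as a plan rather than a proof. Two concrete problems. First, the pivotal claim that ``any positive Toeplitz matrix matching the specified data would have to come from a $k$-fold symmetric lift of such a $\tilde\mu$'' is false as stated: a Toeplitz positive semidefinite completion corresponds to a measure $\mu$ whose Fourier coefficients at indices not divisible by $k$ are unconstrained, so $\mu$ need not be invariant under rotation by $2\pi/k$. The step can be repaired by passing to the pushforward of $\mu$ under $z\mapsto z^k$, which represents the reduced data and satisfies $|\mathrm{supp}\,\mu|\le k\,|\mathrm{supp}\,\tilde\mu|$, but you do not make this move. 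Second, and more importantly, the construction you would need --- for every $Q$ with $\gcd(Q)=1$ that is not an initial segment, data that are strictly partially positive definite at the full $n\times n$ level yet force every representing measure of the gappy truncated moment problem on $Q$ to have support of size less than $n/k$ (or to not exist at all) --- is precisely the hard content of the necessity proof in \cite{MR1709182}, and you explicitly defer it (``treat small cases \ldots then attempt an induction''), with the acknowledged tension between boundary rigidity and strict partial positive definiteness left unresolved. Note also that support rigidity is not the only, nor necessarily the easiest, obstruction: for many non-$P_1$ patterns one exhibits strictly partially positive definite data for which the gappy moment problem is simply infeasible, i.e., no positive semidefinite Toeplitz completion exists at all, in the spirit of the cycle results of \cite{MR1236734} and \cite{MR2156414} cited elsewhere in this paper. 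As it stands, the necessity direction is unproved.
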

In these two results the assumption on the partial matrix is that it is partially positive definite, whereas in Theorem~\ref{thm:main} we make the stronger assumption that a positive definite completion exists.  As a consequence, our characterization includes the patterns $P_2$ and $P_3$.  To the best of our knowledge pattern $P_2$ has not been addressed in the literature. 
A special case of pattern $P_3$,
with $k=1$, was considered in~\cite{MR1236734}, where the authors
characterize the data for which the pattern is positive definite completable.
In~\cite{MR2156414} the result is extended to arbitrary $k$ and
sufficient conditions for Toeplitz completions are provided.  Moreover,
the authors conjecture that whenever a partially positive definite
Toeplitz matrix with pattern $P_3$ is positive definite completable then it admits a
Toeplitz completion.  This conjecture is confirmed true in
Theorem~\ref{thm:main} and more specifically in Theorem~\ref{thm:main2}.

Our motivation for the maximum determinant completion comes from
optimization and the implications of the optimality conditions for
completion problems (see Theorem~\ref{thm:maxdet}).  In particular, a
positive definite completion problem may be formulated as an SDP.  
The \textit{central path} of standard interior point 
methods used to solve SDPs consists
of solutions to the maximum determinant problem. In the recent
work~\cite{SWW:17}, the maximum determinant problem is used to find
feasible points of SDPs when the usual regularity conditions are not
satisfied.  A consequence of Theorem~\ref{thm:main} is that when a
partially Toeplitz matrix having one of the patterns of the theorem
admits a positive semidefinite completion, but not a positive definite
one, then it has a maximum rank positive semidefinite completion that is 
Toeplitz. This result, as well as further discussion on the positive semidefinite case, are presented in 
Section~\ref{sec:sdp}. The application to finding the \emph{singularity degree}
of a family of  SDPs is presented in Section \ref{sec:sd}.

\section{Proof of Main Result with Consequences}
\label{sec:main}
To simplify the exposition, the proof of Theorem~\ref{thm:main} is
broken up into a series of results.  Throughout this section
we assume that every pattern $P$ is a non-empty subset of
$\{1,\dotso,n-1\}$, consisting of strictly increasing integers, and
$\TT$ denotes an $n\times n$ partial symmetric Toeplitz matrix with
pattern (or form) $P$.  We begin by presenting the optimality conditions for the maximum determinant problem.  
\index{pattern, $P$}
\index{$P$, pattern}
\begin{thm}
\label{thm:maxdet}
Let $\TT$ be of the form $P$ and positive definite completable. Then 
$\Tstar$ exists, is unique, and satisfies $(\Tstar)^{-1}_{i,j} = 0$,
whenever $\lvert i - j \rvert \notin P$.
\end{thm}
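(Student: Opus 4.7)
The plan is to prove this as a standard max-determinant result, the pattern side of the matrix completion theory initiated by Grone--Johnson--S\'a--Wolkowicz. The three claims (existence, uniqueness, and the vanishing of $(\Tstar)^{-1}$ on unspecified positions) come from (i) compactness plus continuity of the determinant, (ii) strict concavity of $\log\det$ on $\Snpp$, and (iii) an unconstrained first-order perturbation at the optimum.

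First, for existence I would let $\FF$ denote the (convex) set of positive definite completions of $\TT$, which is nonempty by hypothesis. Every $T\in\FF$ has diagonal entries equal to $t_0$, and Hadamard's inequality gives $\det T\le t_0^n$, so $\sup_{\FF}\det$ is finite. Taking a maximizing sequence $T^{(m)}\in\FF$, since each $T^{(m)}$ is positive semidefinite with fixed diagonal $t_0$, its entries are bounded (by $t_0$ in absolute value), so a subsequence converges to some $T^\star\in\Snp$ with the same specified entries as $\TT$. Continuity of $\det$ forces $\det T^\star>0$, since any positive semidefinite cluster point on the boundary would have determinant zero while the sequence values converge to the supremum, which is strictly positive. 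Hence $T^\star\in\Snpp$ is a completion achieving the maximum. Uniqueness then follows from the strict concavity of $\log\det$ on $\Snpp$ applied on the convex set $\FF$.

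For the vanishing condition, I would fix indices $i\ne j$ with $\lvert i-j\rvert\notin P$; because $\TT$ is partially Toeplitz, the entire diagonal $\lvert i-j\rvert$ is unspecified, so the matrix
\[
\Tstar+\varepsilon(E_{ij}+E_{ji})
\]
is again a completion of $\TT$ for every $\varepsilon\in\R$, and it remains in $\Snpp$ for $\lvert\varepsilon\rvert$ small by openness of $\Snpp$. The scalar function $\varphi(\varepsilon):=\log\det\bigl(\Tstar+\varepsilon(E_{ij}+E_{ji})\bigr)$ has a local maximum at $\varepsilon=0$, and using the identity $\varphi'(0)=\mathrm{tr}\bigl((\Tstar)^{-1}(E_{ij}+E_{ji})\bigr)=2(\Tstar)^{-1}_{ij}$, we conclude $(\Tstar)^{-1}_{ij}=0$.

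The only subtle step is the existence argument: one has to rule out that the supremum is approached on the boundary of $\Snp$. This is handled by the fact that $\det$ vanishes on $\Snp\setminus\Snpp$ and is strictly positive at the (assumed existing) interior feasible point, so the maximum is attained in $\Snpp$; everything else reduces to calculus. I would conclude the proof by remarking that the statement is to be read as excluding the diagonal positions $i=j$, which are already specified, so the claim concerns exactly the unspecified entries of $\TT$.
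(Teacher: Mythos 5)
Your proof is correct, and it is genuinely different in character from what the paper does: the paper does not argue this theorem at all, it simply cites the general positive definite completion theory of Grone--Johnson--S\'a--Wolkowicz (and the report \cite{SWW:17}), where the result is established for arbitrary patterns, not just Toeplitz ones. Your argument is a self-contained reconstruction of that classical result: Hadamard's inequality plus the fixed diagonal $t_0$ bounds the determinant and the off-diagonal entries ($\lvert T_{ij}\rvert \le t_0$ for positive semidefinite $T$), compactness gives a maximizing limit point whose determinant equals the supremum and hence is positive, so the maximizer lies in $\Snpp$; strict concavity of $\log\det$ on the convex set of positive definite completions gives uniqueness; and the free first-order perturbation in an unspecified entry, with $\varphi'(0)=2(\Tstar)^{-1}_{ij}$, gives the zero pattern of the inverse. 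All three steps are sound, and you correctly note that only the unspecified positions are perturbed, so the perturbed matrix is still a completion (it need not be Toeplitz, which is irrelevant here). Two small remarks: the paper's convention is that $E_{i,j}$ already denotes the symmetric matrix with ones in both positions $(i,j)$ and $(j,i)$, so your $E_{ij}+E_{ji}$ should be read with $E_{ij}$ the ordinary matrix unit (the conclusion is unaffected); and your closing observation that the statement implicitly excludes $i=j$ is apt, since $0\notin P$ yet the diagonal is specified. What your route buys is a proof that is elementary and makes visible that the Toeplitz structure plays no role in this theorem beyond identifying which entries are unspecified; what the paper's citation buys is brevity and a pointer to the general framework (including the chordality characterization) that the rest of the paper builds on.
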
  
\begin{proof}  
This result is proved for general positive definite completions 
in~\cite{GrJoSaWo:84}. See also~\cite{SWW:17}.
\end{proof}  
For general positive definite completion problems, this result simply
states that the inverse of the completion of maximum determinant has
zeros in the unspecified (or free) entries.  Since we are interested in
Toeplitz completions, we may say something further using a permutation
under which Toeplitz matrices are invariant.  Let $K$ be the symmetric
$n\times n$ anti-diagonal matrix defined as:
\index{$K$, anti-diagonal permutation}
\index{anti-diagonal permutation, $K$}
\begin{equation}
\label{eq:antidiag}
K_{ij} := \begin{cases}
1 \quad \text{if } i+j = n+1, \\
0 \quad \text{otherwise},
\end{cases}
\end{equation}
i.e.,~$K$ is the permutation matrix that reverses the order of the
sequence $\{1,2,\ldots,n\}$.
\begin{lem}
\label{lem:K}
Let $\TT$ be of the form $P$ and positive definite completable. 
Let $\Tstar$ be the maximum determinant completion, and let
$K$ be the anti-diagonal permutation matrix in \eqref{eq:antidiag}.
Then the following hold.
\begin{enumerate}
\item 
\label{item:KTK}
 $\Tstar = K\Tstar K$.
\item 
\label{item:PTstar}
If $P$ is of the form $P_2$ with $k=1$, i.e.,~$P = \{1,2,\dotso,n-3,n-1\}$, then $\Tstar$ is Toeplitz.
\end{enumerate}
\end{lem}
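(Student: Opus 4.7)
For item (1), the plan is to exploit the uniqueness of $\Tstar$ guaranteed by Theorem~\ref{thm:maxdet}. The map $X\mapsto KXK$ acts entrywise by $(KXK)_{ij}=X_{n+1-i,n+1-j}$, and the key observation is that this map preserves $|i-j|$. Consequently, if $\TT$ is a partial symmetric Toeplitz matrix with pattern $P$ and $T$ is any completion of $\TT$, then $KTK$ is also a completion of $\TT$: for each specified position $(i,j)$ with $|i-j|\in P\cup\{0\}$, the position $(n+1-i,n+1-j)$ is on the same diagonal and the Toeplitz condition forces $T_{n+1-i,n+1-j}=\TT_{ij}$. Since $K=K^T=K^{-1}$, the matrix $K\Tstar K$ is symmetric positive definite with the same determinant as $\Tstar$, and so it too is a maximum determinant positive definite completion of $\TT$. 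Uniqueness then yields $K\Tstar K=\Tstar$.

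For item (2), the plan is to note that the pattern $P=\{1,2,\dotso,n-3,n-1\}$ omits only the single diagonal index $n-2$, and the entries on the $(n-2)$-nd superdiagonal are exactly $\Tstar_{1,n-1}$ and $\Tstar_{2,n}$. To show $\Tstar$ is Toeplitz it suffices to prove these two entries are equal. Applying item (1) and using the symmetry of $\Tstar$,
\[
\Tstar_{1,n-1}=(K\Tstar K)_{1,n-1}=\Tstar_{n,2}=\Tstar_{2,n}.
\]
Hence the single unspecified diagonal is constant, while all other diagonals are constant by the partial Toeplitz hypothesis, so $\Tstar$ is Toeplitz.

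The only point requiring any care is verifying that $K\Tstar K$ is genuinely a completion of $\TT$, which reduces to the straightforward check that the involution $(i,j)\mapsto(n+1-i,n+1-j)$ preserves the equivalence class $|i-j|$ labelling the diagonals. Once this is in place the uniqueness in Theorem~\ref{thm:maxdet} does all of the work for item (1), and item (2) becomes the immediate combinatorial observation that the pattern $P_2$ with $k=1$ leaves only a two-entry diagonal unspecified, which the $K$-symmetry collapses to a single value.
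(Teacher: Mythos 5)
Your proof is correct and follows essentially the same route as the paper: verify that $(K\Tstar K)_{ij}=\Tstar_{n+1-i,n+1-j}$ preserves the diagonal index $|i-j|$, conclude that $K\Tstar K$ is a positive definite completion with the same determinant, and invoke the uniqueness in Theorem~\ref{thm:maxdet}; item (2) is then the same one-line computation $\Tstar_{1,n-1}=(K\Tstar K)_{1,n-1}=\Tstar_{n,2}=\Tstar_{2,n}$. No gaps.
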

\begin{proof}
For Item \ref{item:KTK}, it is a simple exercise to verify that the
permutation reverses the order of the rows and columns and we have 
\[
[K\Tstar K]_{ij} = \Tstar_{n+1-i,n+1-j}, \,
\forall i,j \in \{1,\dotso,n\}.
\]
Moreover,
\[
\lvert n+1-i - (n+1-j)\rvert = \lvert i-j\rvert.
\]
Therefore, it follows that
\[
[K\Tstar K]_{ij} = \Tstar_{n+1-i,n+1-j} = \Tstar_{ij} = t_{\lvert i-j\rvert},
\,\, \forall \lvert i-j \rvert \in P\cup \{0\}. 
\]
Hence $K\Tstar K$ is a completion of $\TT$.  Moreover, $K\cdot K$ is
an automorphism of the cone of positive definite matrices. Hence
$K\Tstar K$ is a positive definite completion of $\TT$, and since $K$ is
a permutation matrix, we conclude that $\det(K\Tstar K) = \det(\Tstar)$.
By Theorem~\ref{thm:maxdet}, $\Tstar$ is the unique maximizer of the
determinant. Therefore $\Tstar = K\Tstar K$, as desired.

For Item \ref{item:PTstar}, we let $\TT$ be as in the hypothesis and
note that the only unspecified entries are $(1,n-1)$ and $(2,n)$, and
their symmetric counterparts.  Therefore it suffices to show that
$\Tstar_{1,n-1} = \Tstar_{2,n}$.  By applying Item \ref{item:KTK} we get
\[
\Tstar_{1,n-1} = [K\Tstar K]_{1,n-1} = \Tstar_{n+1-1,n+1-(n-1)} = \Tstar_{n,2} = \Tstar_{2,n},
\]
as desired.
\end{proof}

The pattern $\{1,2,\dotso,n-3,n-1\}$ in Lemma \ref{lem:K}, above,
is a special case of pattern $P_2$ with $k=1$.  In fact, we show that a
general pattern $P_2$ may always be reduced to this special case.  A
further observation is that this specific pattern is nearly of the form
$P_1$.  Indeed, if the diagonal $n-2$ were specified, the pattern would be of
the form $P_1$.  In fact, for any pattern of the form $P_2$, if the
diagonal $(r+1)k$ were specified, the pattern would be of the form
$P_1$.  We now state a useful lemma for proving that 
Theorem \ref{thm:main}, Item \ref{item:Prnk} implies 
Theorem \ref{thm:main}, Item \ref{item:parttoep}, when $P$ is of the 
form $P_1$ or $P_2$.

\begin{lem}
\label{lem:block}
Let $\Ss$ be a partial $n\times n$ positive definite completable symmetric matrix and $Q$ a permutation matrix of order $n$ such that
\[
Q^T\Ss Q = \begin{bmatrix}
\Ss_1 & &&  \\
& \Ss_2 & &  \\
& & \ddots &  \\
& && \Ss_{\ell} \\
\end{bmatrix},
\]
for some $\ell \in \NN$. Here each block $\Ss_i$ is a 
partial symmetric matrix for
$i \in \{1,\dotso,\ell\}$, and the elements outside of the blocks are
all unspecified.  Then the maximum determinant completion of $\Ss_i$,
denoted $S^{\star}_i$, exists and is unique.  Moreover, the unique
maximum determinant completion of $\Ss$ is given by
\[
S^{\star} = Q \begin{bmatrix}
S^{\star}_1 &0 &\cdots&0  \\
0 & S^{\star}_2 &\cdots &0  \\
\vdots &\vdots & \ddots & \vdots \\
0& 0&\cdots& S^{\star}_{\ell} \\
\end{bmatrix}Q^T.
\]
\end{lem}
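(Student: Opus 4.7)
My plan is to reduce to the case $Q = I$ and then combine Fischer's determinantal inequality for block-diagonal positive definite matrices with the optimality characterization already recorded in Theorem~\ref{thm:maxdet}.

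First, I would exploit the fact that $Q$ is a permutation matrix, so $\det(QXQ^T) = \det(X)$ for any matrix $X$, and $QXQ^T$ is positive definite if and only if $X$ is. Moreover, completions of $\Ss$ correspond bijectively, via $X \mapsto Q^T X Q$, to completions of $Q^T\Ss Q$, with matching patterns of specified entries. Hence it suffices to prove the lemma when $Q = I$ and $\Ss$ itself is block diagonal with all off-block entries unspecified.

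Next, suppose $S$ is any positive definite completion of $\Ss$. Partitioning $S$ conformally with the blocks, each diagonal block $S_i$ is a completion of $\Ss_i$, and is positive definite as a principal submatrix of a positive definite matrix. Thus each $\Ss_i$ is positive definite completable, so by Theorem~\ref{thm:maxdet} the unique maximum determinant completion $S_i^{\star}$ exists. The matrix $\tilde{S} := \mathrm{diag}(S_1^{\star},\dotsc,S_{\ell}^{\star})$ is a positive definite completion of $\Ss$, since setting the free off-block entries to zero respects the pattern.

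Finally, I would invoke Fischer's inequality, which asserts that for a positive definite matrix $S$ with diagonal blocks $S_1,\dotsc,S_{\ell}$,
\[
\det(S) \le \prod_{i=1}^{\ell} \det(S_i),
\]
with equality if and only if all off-diagonal blocks vanish. Since each $S_i$ is a positive definite completion of $\Ss_i$, we have $\det(S_i) \le \det(S_i^{\star})$ by the defining property of $S_i^{\star}$. Chaining these estimates gives $\det(S) \le \det(\tilde{S})$, establishing optimality. For uniqueness, equality in Fischer's inequality forces $S$ to be block diagonal, while equality in each factor forces $S_i = S_i^{\star}$ by the uniqueness clause of Theorem~\ref{thm:maxdet}. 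The only nontrivial input is Fischer's inequality together with its equality case; the remainder of the argument is bookkeeping about how permutations and block structure interact with the determinant and with the pattern of specified entries.
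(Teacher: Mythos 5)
Your proposal is correct and follows essentially the same route as the paper: reduce via the permutation (an automorphism preserving positive definiteness and determinant) to the block-diagonal pattern, then apply Fischer's determinantal inequality to conclude the optimal completion is $\mathrm{diag}(S_1^{\star},\dotsc,S_{\ell}^{\star})$. Your explicit use of the equality case of Fischer's inequality to get uniqueness is a slightly more detailed rendering of a step the paper leaves implicit, but the argument is the same.
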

\begin{proof}
Since $Q^T\cdot Q$ is an automorphism of the positive definite matrices,
with inverse $Q\cdot Q^T$, we have that $Q^T\Ss Q$ is positive definite completable and
admits a unique maximum determinant completion, say $\hat{S}$.
Moreover, under the map $Q\cdot Q^T$, every completion of $Q^T \Ss Q$
corresponds to a unique completion of $\Ss$, with the same determinant,
since the determinant is invariant under the transformation $Q\cdot
Q^T$.  Therefore, we have $S^{\star} = Q\hat{S} Q^T$.  Now we show that
$\hat{S}$ has the block diagonal form.  Observe that $\Ss_i$ is
positive definite completable, take for instance the positive definite submatrices of
$\hat{S}$ corresponding to the blocks $\Ss_i$.  Thus $S^{\star}_i$ is
well defined, and by the determinant Fischer inequality, 
e.g.,~\cite[Theorem 7.8.3]{HoJo:85}, we  have
\[
\hat{S} = \begin{bmatrix}
S^{\star}_1 &0 &\cdots&0  \\
0 & S^{\star}_2 &\cdots &0  \\
\vdots &\vdots & \ddots & \vdots \\
0& 0&\cdots& S^{\star}_{\ell} \\
\end{bmatrix},
\] 
as desired.
\end{proof}

In~\cite{MR1709182} it is shown that a partial Toeplitz matrix of the
form $P_1$ with $rk = n-1$ can be permuted into a block diagonal matrix as in 
Lemma~\ref{lem:block}.  We use this observation and extend it to
all patterns of the form $P_1$, as well as patterns of the form $P_2$, in the following.

\begin{prop}
\label{prop:P1P2}
Let $\TT$ be positive definite completable and of the form $P_1$ or $P_2$. Then $\Tstar$ is Toeplitz.
\end{prop}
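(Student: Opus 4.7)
The plan is to exploit the fact that every element of $P$---whether the pattern is $P_1$ or $P_2$---is a multiple of $k$, so that a specified entry $\TT_{ij}$ can only occur when $i \equiv j \pmod{k}$. I would first introduce the permutation matrix $Q$ that reorders $\{1,\dotso,n\}$ by residue class modulo $k$, listing first the indices congruent to $1$, then to $2$, and so on. Under this reordering $Q^T \TT Q$ becomes block diagonal with $k$ blocks $\Ss_1, \dotso, \Ss_k$; every entry of $Q^T \TT Q$ lying between two different residue classes is unspecified, because its original off-diagonal index $|i-j|$ is not a multiple of $k$ and therefore cannot belong to $P$. Lemma~\ref{lem:block} then gives
\[
\Tstar \;=\; Q\,\bigl(S_1^\star \oplus \cdots \oplus S_k^\star\bigr)\, Q^T,
\]
with every off-block entry of the permuted matrix equal to $0$.

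I would next identify each block $\Ss_s$ as a smaller partial symmetric Toeplitz matrix. Writing the $\nu$-th element of the $s$-th residue class as $s+(\nu-1)k$, the ``reduced'' diagonal index of an entry of $\Ss_s$ is $|i-j|/k$; hence the reduced pattern is $\{1,\dotso,r\}$ (intersected with the feasible range) when $P=P_1$, and $\{1,\dotso,r-2,r\}$ when $P=P_2$. For $P_1$, a block of size at most $r+1$ is already fully specified, while a larger block falls under Theorem~\ref{thm:banded}, which guarantees that $S_s^\star$ is Toeplitz. For $P_2$ the assumption $n=(r+1)k$ makes every residue class have size exactly $r+1$, so the reduced pattern is $\{1,\dotso,r-2,r\}$ on an $(r+1)\times(r+1)$ block, and Lemma~\ref{lem:K}, Item~\ref{item:PTstar} immediately yields that $S_s^\star$ is Toeplitz.

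Finally I would verify that, after permuting back by $Q$, the assembled $\Tstar$ is Toeplitz as a whole. Any original diagonal $|i-j|$ not divisible by $k$ lies in the off-block part and is uniformly $0$. Any original diagonal $mk$ lies within the blocks, where it appears as reduced diagonal $m$ of every block large enough to contain it. In the $P_2$ case all $k$ blocks are \emph{identical} partial Toeplitz matrices of size $r+1$ with the same data, so their maximum-determinant completions coincide and the diagonal values of $\Tstar$ match automatically. In the $P_1$ case the blocks can have sizes $\lfloor n/k\rfloor$ or $\lceil n/k\rceil$; the specified reduced diagonals $1,\dotso,r$ obviously carry the common data $t_k,\dotso,t_{rk}$, and the unspecified reduced diagonals beyond $r$ are filled by the Dym--Gohberg maximum-entropy extension, whose values are determined by a Levinson--Durbin-type recursion depending only on $t_0,t_k,\dotso,t_{rk}$ and \emph{not} on the block size. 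The hard part is exactly this size-independence of the Dym--Gohberg extension; I would justify it either by invoking the maximum-entropy/AR-process characterization, or by a direct inductive argument showing that the leading principal submatrix of the Dym--Gohberg completion at size $N$ is itself the Dym--Gohberg completion at size $N-1$. Once this consistency is in hand, all $S_s^\star$ place the same value on each reduced diagonal, and $\Tstar$ is Toeplitz.
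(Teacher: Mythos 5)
Your proposal is correct and follows essentially the same route as the paper: permute by residue classes modulo $k$ into a block-diagonal partial matrix, apply Lemma~\ref{lem:block}, complete the blocks via Theorem~\ref{thm:banded} for $P_1$ and via Lemma~\ref{lem:K}, Item~\ref{item:PTstar} for $P_2$ (all blocks identical there since $n=(r+1)k$), and reassemble. The size-independence of the band extension that you flag as the hard part is exactly the point the paper settles by citing the permanence principle of Ellis--Gohberg--Lay \cite{MR892133} (the maximum-determinant band extension of a principal submatrix agrees with the corresponding submatrix of the larger extension), and your sketched justification via the maximum-entropy/AR characterization or an induction on the block size is a valid way to establish that same fact.
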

\begin{proof}
Let $\TT$ be of the form $P_1$ with data $\{t_0,t_k, t_{2k},\dotso,t_{rk}\}$ and let $p \ge r$ be the largest integer so that $pk \le n-1$.  As in~\cite{MR1709182}, there exists a permutation matrix $Q$ of order $n$ such that 
\[
Q^T\TT Q = \begin{bmatrix}
\TT_0 & &&&&  \\
& \ddots & &&&  \\
& & \TT_0 &&&  \\
& && \TT_1 &&\\
&&&&\ddots&\\
&&&&& \TT_1
\end{bmatrix},
\]
where $\TT_0$ is a $(p+1)\times (p+1)$ partial Toeplitz matrix occuring
$n-pk$ times and and $\TT_1$ is a $p\times p$ partial Toeplitz.
Moreover, $\TT_0$ and $\TT_1$ are both partially positive definite.  Let
us first consider the case $p=r$.  Then $\TT_0$ and $\TT_1$ are actually
fully specified, and the maximum determinant completion of $Q^T \TT Q$,
as in Lemma~\ref{lem:block}, is obtained by fixing the elements outside
of the blocks to $0$.  After permuting back to the original form,
$\Tstar$ has zeros in every unspecified entry. Hence it is Toeplitz.
Now suppose $p > r$.  Then $\TT_0$ is a partial Toeplitz matrix with
pattern $\{1,2,\dotso,r\}$ and data $\{t_0,t_k,t_{2k},\dotso,t_{rk}\}$
and $\TT_1$ is a partial Toeplitz matrix having the same pattern and
data as $\TT_0$, but one dimension smaller.  That is, $\TT_1$ is a
partial principal submatrix of $\TT_0$.  By Theorem~\ref{thm:banded}
both $\TT_0$ and $\TT_1$ are positive definite completable and their
maximum determinant completions, $\TT_0^{\star}$ and $\TT_1^{\star}$,
are Toeplitz.  Let $\{a_{(r+1)k}, a_{(r+2)k}, \dotso, a_{pk}\}$ be the
data of $\TT_0^{\star}$ corresponding to the unspecified entries and let
$\{b_{(r+1)k}, b_{(r+2)k}, \dotso, b_{(p-1)k}\}$, be the data
corresponding to the unspecified entries of $\TT_1$.  By the permanence
principle of \cite{MR892133}, $\TT_1$ is a principle submatrix of
$\TT_0$ and therefore $b_i = a_i$, for all 
$i \in \{(r+1)k,(r+2)k,\dotso,(p-1)k\}$.
By Lemma~\ref{lem:block}, the maximum determinant completion of $Q^T\TT Q$ is obtained by completing $\TT_0$ and $\TT_1$ to $\TT_0^{\star}$ and $\TT_1^{\star}$ respectively, and setting the entries outside of the blocks to zero.  After permuting back to the original form we get that $\Tstar$ is Toeplitz with data $a_{(r+1)k}, a_{(r+2)k}, \dotso, a_{pk}$ in the diagonals $(r+1)k, (r+2)k, \dotso,pk$ and zeros in all other unspecified diagonals.

Now suppose that $\TT$ is of the form $P_2$.  By applying the same
permutation as above, and by using the fact that $n=(r+1)k$ and each
block $\TT_0$ is of size $r+1$, we see that the submatrix consisting only of blocks $\TT_0$ is of size
\[
(n-rk)(r+1) = ((r+1)k-rk)(r+1) = k(r+1) = n.
\]
Hence, 
\[
Q^T\TT Q = \begin{bmatrix}
\TT_0 & &  \\
& \ddots &   \\
& & \TT_0   \\
\end{bmatrix},
\]
where $\TT_0$ is a partial matrix with pattern $\{1,2,\dotso,r-2,r\}$ and data
\[
\{t_0,t_k,t_{2k},\dotso,t_{(r-2)k},t_{rk}\}.
\]  
The unspecified elements of diagonal $(r-1)k$ of $\TT$ are contained in
the unspecified elements of diagonal $r-1$ of the partial matrices
$\TT_0$.  By Lemma~\ref{lem:K}, the maximum determinant completion of $\TT_0$ is Toeplitz with value $t_{(r-1)k}$ in the unspecified diagonal.  As in the above, after completing $Q^T\TT Q$ to its maximum determinant positive definite completion and permuting back to the original form, we obtain the maximum determinant Toeplitz completion of $\TT$ with value $t_{(r-1)k}$ in the diagonal $(r-1)k$ and zeros in every other unspecified diagonal, as desired.
\end{proof}

We now turn our attention to patterns of the form $P_3$.  Let $J$ denote
the $n\times n$ lower triangular Jordan block with eigenvalue $0$.  That
is, $J$ has ones on diagonal $-1$ and zeros everywhere else. We also let
$e_1,\dotso,e_n \in \Rn$ denote the columns of the identity matrix,
i.e.,~the canonical unit vectors.  With this notation we have $J = \sum_{j=1}^{n-1} e_{j+1}e_j^T$.  We state several technical results regarding $J$ in the following lemma.
\index{$J$, Jordan block}
\index{Jordan block, $J$}

\begin{lem}
\label{J} 
With $J$ defined as above and $k,l \in \{0,1,\dotso,n-1\}$, the following hold.
\begin{enumerate}
\item 
\label{item:sumes}
$J^k = \sum_{j=1}^{n-k} e_{j+k}e_j^T$.
\item 
\label{item:nnzJ}
$J^k(J^T)^l$ has nonzero elements only in the diagonal $l-k$.
\item
\label{item:JJz}
 If $k < l$, then 
\[
J^k(J^T)^l - J^{n-l} (J^T)^{n-k} = 0 \ \iff \ l=n-k.
\] 
\end{enumerate}
\end{lem}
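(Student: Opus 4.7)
The plan is to prove the three items in order, each building on the previous, with everything reduced to bookkeeping of unit-vector sums.

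For Item \ref{item:sumes}, I would argue by induction on $k$. The base case $k=0$ gives $J^0 = I = \sum_{j=1}^{n} e_j e_j^T$; the case $k=1$ is the stated definition. For the inductive step, writing $J^k = J\cdot J^{k-1}$ and using the orthonormality relation $e_i^T e_{j+k-1} = \delta_{i,j+k-1}$ collapses the double sum to $\sum_{j=1}^{n-k} e_{j+k}e_j^T$, with the upper limit $n-k$ forced by requiring both $j \ge 1$ and $j+k-1 \le n-1$.

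For Item \ref{item:nnzJ}, I would apply Item \ref{item:sumes} to both factors, observing that $(J^T)^l = \sum_{i=1}^{n-l} e_i e_{i+l}^T$, and then multiply. Orthonormality forces $i=j$, yielding
\[
J^k (J^T)^l = \sum_{j=1}^{\min(n-k,n-l)} e_{j+k}\, e_{j+l}^T.
\]
Each rank-one term has its single nonzero entry at position $(j+k,\,j+l)$, which sits on diagonal $(j+l)-(j+k) = l-k$, establishing the claim.

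For Item \ref{item:JJz}, I would use the explicit formula from Item \ref{item:nnzJ}. Under the hypothesis $k<l$ we have $n-l < n-k$, so
\[
J^k (J^T)^l = \sum_{j=1}^{n-l} e_{j+k}\,e_{j+l}^T, \qquad J^{n-l}(J^T)^{n-k} = \sum_{j=1}^{k} e_{j+n-l}\,e_{j+n-k}^T.
\]
Both matrices are $0/1$-matrices supported on diagonal $l-k$. The first has its $1$'s in rows $k+1,\ldots,n-l+k$, while the second has its $1$'s in rows $n-l+1,\ldots,n-l+k$. Since each entry has value $1$, the difference vanishes if and only if these row sets coincide. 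Equality of starting rows gives $k+1 = n-l+1$, i.e., $l = n-k$; conversely, when $l=n-k$ the counts $n-l$ and $k$ agree and both sets equal $\{k+1,\ldots,2k\}$, so the two matrices are identical.

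There is no real obstacle here; the only care needed is tracking the summation ranges correctly so that the hypothesis $k<l$ is used to identify the minima and avoid off-by-one errors in the row ranges.
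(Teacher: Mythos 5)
Your proposal is correct and follows essentially the same route as the paper: express $J^k$ and $(J^T)^l$ as sums of rank-one terms $e_{j+k}e_j^T$, multiply using orthonormality, and compare the supports of the two resulting $0/1$ matrices on the diagonal $l-k$. Your treatment of Item \ref{item:JJz} is in fact slightly more explicit than the paper's, which simply asserts the equivalence after writing the difference of the two sums.
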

\begin{proof} 
For Item \ref{item:sumes} the result clearly holds when $k\in \{0,1\}$.
Now observe that for integers of suitable size
$(e_ke_l^T)(e_ie_j^T) \ne 0$ if, and only if, $l=i$ in which case the product is $e_ke_j^T$.  Thus we have
\[
J^2 = \left( \sum_{j=1}^{n-1} e_{j+1}e_j^T\right)\left(\sum_{j=1}^{n-1} e_{j+1}e_j^T\right) = \sum_{j=2}^{n-1} (e_{j+1}e_j^T)(e_je_{j-1}^T) = \sum_{j=1}^{n-2} e_{j+2}e_j^T.
\]
Applying an induction argument yields the desired expression for arbitrary $k$.

For Item \ref{item:nnzJ}, we use the result of item  \ref{item:sumes} to get
\begin{align*}
J^k(J^T)^l &= \left( \sum_{j=1}^{n-k} e_{j+k}e_j^T\right)\left(\sum_{j=1}^{n-l} e_je_{j+l}^T\right), \\
&= \sum_{j=1}^{n-\max \ \{k,l\}} (e_{j+k}e_j^T)(e_je_{j+l}^T), \\
&= \sum_{j=1}^{n-\max \ \{k,l\}} e_{j+k}e_{j+l}^T.
\end{align*}
The nonzero elements of this matrix are contained in the diagonal $j+l - (j+k) = l-k$.

Finally, for Item \ref{item:JJz} we have 
\[
J^k(J^T)^l - J^{n-l} (J^T)^{n-k}  = \sum_{j=1}^{n-l} e_{j+k}e_{j+l}^T - \sum_{j=1}^k e_{j+n-l}e_{j+n-k}^T.
\] 
This matrix is the zero matrix if, and only if, $l=n-k$. 
\end{proof}
We now state a special case of the Schur-Cohn Criterion using the matrix
$J$.  We let $\Sn$ denote the Euclidean space of symmetric matrices,
$\Snpp$ the cone of positive definite matrices, and $\Snt$ the subset of
symmetric, positive definite, Toeplitz matrices.  
\begin{thm}[Schur-Cohn Criterion,~\cite{MR638124}]
\label{thm:schurcohn}
Let $f(z) = a_0 + a_1z + \cdots + a_nz^n$ be a polynomial with real
coefficients $a:=(a_0,\dotso,a_n)$. Let
\[
A(a) := \sum_{j=0}^{n-1} a_jJ^j, \ B(a):= \sum_{j=1}^{n-1} a_j \left( J^T\right)^{n-j}.
\]
Then every root of $f(z)$ satisfies $\lvert z \rvert > 1$ if, and only
if, 
\[
\Bez(a) := A(a)A(a)^T - B(a)^TB(a) \in \Snpp,
\]
where the matrix $\Bez(a)$ is the Toeplitz Bezoutian.
Moreover $\Bez(a)^{-1}$ is Toeplitz.
\end{thm}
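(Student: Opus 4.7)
The plan is to derive both claims from the classical Schur-Cohn criterion together with the known Toeplitz-Bezoutian duality, so the work divides cleanly into (i) matching the matrix $\Bez(a)$ to a standard object, and (ii) extracting the Toeplitz structure of its inverse.

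First, I would unpack the definitions. By Lemma~\ref{J}, Item~\ref{item:sumes}, the matrix $A(a)=\sum_{j=0}^{n-1} a_j J^j$ is the lower-triangular Toeplitz matrix whose first column is $(a_0,\ldots,a_{n-1})^T$, i.e., the matrix of multiplication (modulo $z^n$) by $f$ with its leading term suppressed. Analogously, $B(a)$ is an upper-triangular Toeplitz matrix whose superdiagonals encode the coefficients of the reciprocal polynomial $\tilde f(z):=z^n f(1/z)$. By Lemma~\ref{J}, Item~\ref{item:nnzJ}, each summand $J^i(J^T)^j$ appearing in the expansion of $A(a)A(a)^T - B(a)^T B(a)$ is supported on a single (shifted) diagonal, so a careful but essentially routine entry-by-entry calculation identifies $\Bez(a)$ with the classical Bezoutian of $f$ and $\tilde f$. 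Once this identification is made, the equivalence between $\Bez(a)\in \Snpp$ and the condition that every root of $f$ satisfies $\lvert z \rvert > 1$ is exactly the matrix form of the classical Schur-Cohn criterion, which I would cite from \cite{MR638124}.

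For the final clause --- that $\Bez(a)^{-1}$ is Toeplitz --- the plan is to invoke the Gohberg-Semencul type duality between Bezoutians and Toeplitz matrices, which produces an explicit Toeplitz formula for the inverse of a nonsingular Bezoutian in terms of $A(a)^{-1}$ acting on certain distinguished vectors. The algebraic pivot is Lemma~\ref{J}, Item~\ref{item:JJz}: the identity $J^k(J^T)^l = J^{n-l}(J^T)^{n-k}$ holds precisely on the anti-diagonal pair $l=n-k$, and this is exactly the displacement identity (i.e., the obstruction to the rank-2 displacement $\nabla(X) := X - JXJ^T$ vanishing) that forces $\Bez(a)^{-1}$ to be constant along each diagonal.

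The main obstacle is the final clause rather than the equivalence. Matching $\Bez(a)$ to the classical Bezoutian is bookkeeping between polynomial coefficients and matrix entries, and the Schur-Cohn equivalence then comes for free. Producing the Toeplitz structure of the inverse, by contrast, requires either a displacement-rank computation or a direct appeal to the Gohberg-Semencul inversion formula; I expect the real labor to be in setting up the displacement equation so that Lemma~\ref{J}, Item~\ref{item:JJz} can be applied without wading through individual matrix entries.
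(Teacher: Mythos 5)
Your proposal is correct and takes essentially the same route as the paper: the paper does not actually prove Theorem \ref{thm:schurcohn}, but cites \cite{MR638124} for the classical criterion (remarking that the usual ``all roots in $\lvert z\rvert<1$'' form gives the stated $\lvert z\rvert>1$ form after reversing coefficients, per \cite{MR0225972}) and cites \cite{MR2656823} for the fact that $\Bez(a)^{-1}$ is Toeplitz. Your plan---identify $\Bez(a)$ with the Bezoutian of $f$ and its reciprocal polynomial and quote the classical Schur--Cohn criterion, then obtain the Toeplitz inverse from Gohberg--Semencul/Heinig--Rost Bezoutian--Toeplitz duality---amounts to exactly this reduction, the only caveat being that Lemma \ref{J}~(\ref{item:JJz}) by itself is not ``exactly'' the displacement identity that forces the inverse to be Toeplitz (that still requires the full displacement-rank or inversion-formula argument, which the paper sidesteps by citation).
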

The Schur-Cohn criterion is usually stated for the case where the roots
are contained within the interior of the unit disk, but a simple
reversal of the coefficients, as described in Chapter~X of
\cite{MR0225972}, leads to the above statement. For further information
on the \textit{Toeplitz Bezoutian}  $\Bez(a)$, and for a proof of the fact 
that $\Bez(a)^{-1}$ is
Toeplitz, see  e.g.,~\cite{MR2656823}. 

We now present a result on the 
maximum determinant completion of partial Toeplitz matrices with pattern $P_3$.
\begin{prop}
\label{prop:P3} 
Let $\TT$ be positive definite completable and of the form $P_3$. Then $\Tstar$ is Toeplitz.
\end{prop}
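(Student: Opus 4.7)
Plan: I would construct a positive definite Toeplitz completion of $\TT$ whose inverse has precisely the sparsity pattern forced on $(\Tstar)^{-1}$ by Theorem \ref{thm:maxdet}, and then invoke uniqueness of the maximum determinant completion. Since pattern $P_3$ specifies only diagonals $0, k$, and $n-k$, Theorem \ref{thm:maxdet} requires $(\Tstar)^{-1}$ to vanish off the diagonals $0, \pm k, \pm(n-k)$.

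The key algebraic observation is that four-term polynomials yield Bezoutians supported on exactly these diagonals. Consider
\[
f(z) = a_0 + a_k z^k + a_{n-k} z^{n-k} + a_n z^n,
\]
so that $A(a) = a_0 I + a_k J^k + a_{n-k} J^{n-k}$ and $B(a) = a_k (J^T)^{n-k} + a_{n-k} (J^T)^k$. Expanding $\Bez(a) = A(a) A(a)^T - B(a)^T B(a)$ via Lemma \ref{J}: each product $J^i (J^T)^j$ contributes to a single diagonal by item \ref{item:nnzJ}, and the two off-pattern cross terms $a_k a_{n-k} J^k (J^T)^{n-k}$ and $a_k a_{n-k} J^{n-k}(J^T)^k$ arising in $A(a)A(a)^T$ cancel exactly against identical contributions appearing in $B(a)^T B(a)$. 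What survives lies only on diagonals $0, \pm k, \pm(n-k)$, precisely matching the pattern imposed on $(\Tstar)^{-1}$. By Theorem \ref{thm:schurcohn}, whenever $a$ lies in the Schur-Cohn region (all roots of $f$ outside the closed unit disk), $\Bez(a) \in \Snpp$ and $\Bez(a)^{-1}$ is a positive definite Toeplitz matrix.

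It then remains to show that every admissible datum is realized by some such Bezoutian. Normalizing $a_n = 1$, the task is to solve the system
\[
(\Bez(a)^{-1})_{1,1} = t_0, \quad (\Bez(a)^{-1})_{1,k+1} = t_k, \quad (\Bez(a)^{-1})_{1,n-k+1} = t_{n-k}
\]
for $(a_0, a_k, a_{n-k})$ in the Schur-Cohn region. I would approach this by a continuity/connectedness argument: the evaluation map is smooth, and one shows its image is both open in the target (via Jacobian nondegeneracy at the convenient base point $a_k = a_{n-k} = 0$, where $\Bez(a) = a_0^2 I$ and the derivatives are explicit) and closed within the path-connected set of positive-definite-completable data (degenerations of $a$ driving $\Bez(a)$ to the boundary of $\Snpp$ correspond to $\TT$ leaving the positive-definite-completable region). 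Once a matching $a$ is found, the uniqueness in Theorem \ref{thm:maxdet} forces $\Tstar = \Bez(a)^{-1}$, so $\Tstar$ is Toeplitz. The principal obstacle is this final surjectivity step: the Bezoutian cancellations from Lemma \ref{J} are purely formal, but rigorously matching the Schur-Cohn parametrization to the positive-definite-completable region requires careful control of their common boundary behavior.
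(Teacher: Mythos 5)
Your overall strategy coincides with the paper's: parametrize candidate completions by the Schur--Cohn/Bezoutian family, note via Lemma~\ref{J} that the cross term $J^k(J^T)^{n-k}$ cancels so that $\Bez(a)$ is supported exactly on diagonals $0,\pm k,\pm(n-k)$, conclude from Theorem~\ref{thm:maxdet} that any realized datum has Toeplitz $\Tstar=\Bez(a)^{-1}$, and then run an open--closed--connected argument on the (convex, hence connected) region $\mathcal{O}$ of positive definite completable data. That skeleton is right. The gaps are in the two topological steps. For openness, a nondegenerate Jacobian at the single base point $a_k=a_{n-k}=0$ only shows that the image contains a neighborhood of data of the form $(t_0,0,0)$; the argument needs the image to be open at \emph{every} realized datum. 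The missing tool is the Gohberg--Semencul inversion formula: if $T$ is Toeplitz positive definite and $T^{-1}$ vanishes off diagonals $0,\pm k,\pm(n-k)$, then $T^{-1}=\Bez\bigl(\sqrt{f_0},\,f_k/\sqrt{f_0},\,f_{n-k}/\sqrt{f_0}\bigr)$ with $f_0,f_k,f_{n-k}$ read off the first column of $T^{-1}$. This gives an explicit continuous inverse of your evaluation map $G$ on its image (equivalently, injectivity of $G$, after which invariance of domain would also do), and that is what makes the image open in the paper's proof. Without this, or an everywhere-nondegenerate Jacobian computation, openness is not established.

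For closedness, your heuristic that degenerations of $a$ toward $\partial\mathcal{P}$ push the data out of $\mathcal{O}$ requires showing that along a sequence of realized data $t^{(m)}\to t\in\mathcal{O}$ the parameters $a^{(m)}$ remain bounded and stay away from $\partial\mathcal{P}$; this compactness control is exactly the delicate point you flag, and it is not automatic from the data alone. The paper avoids it entirely by proving closedness of the set $\mathcal{U}$ of data whose maximum determinant completion is Toeplitz: the map $F$ sending data in $\mathcal{O}$ to the maximum determinant completion is continuous (Sturm--Zhang), and $\Snpp\cap\Snt$ is closed in $\Snpp$, so $\mathcal{U}=F^{-1}(\Snpp\cap\Snt)$ is closed in $\mathcal{O}$. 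If you import these two ingredients --- Gohberg--Semencul for openness (and for identifying the image with $\mathcal{U}$) and continuity of the maximum determinant map for closedness --- your outline becomes essentially the paper's proof.
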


\begin{proof}
Let $\TT$ be as in the hypothesis with pattern of the form $P_3$ defined the integer $k$.  Furthermore, let ${\mathcal O} \subset {\mathbb R}_{++} \times
{\mathbb R}^2$ consist of all triples $(t_0,t_k,t_{n-k})$ so that the
partial Toeplitz matrix with pattern $P_3$ and data
$\{t_0,t_k,t_{n-k}\}$ is positive definite completable. Then it can be verified that ${\mathcal O}$ is an open convex set, and thus in particular connected. We let ${\mathcal U} \subseteq {\mathcal O}$ consist of those triples $(t_0,t_k,t_{n-k})$ for which the corresponding maximum determinant completion is Toeplitz and we claim that ${\mathcal U} = {\mathcal O}$. Clearly ${\mathcal U} \neq \emptyset$ as $(t_k,0,0) \in {\mathcal U}$ for all $t_k>0$. We show that ${\mathcal U}$ is both open and closed in ${\mathcal O}$, which together with the connectedness of ${\mathcal O}$ yields that ${\mathcal U} = {\mathcal O}$.

First observe that the map $F: {\mathcal O} \to \Snpp$ that takes
$(t_0,t_k,t_{n-k})$ to its corresponding positive definite maximum
determinant completion is continuous; see, for instance,
\cite{MR1824072}. Next, the Toeplitz positive definite matrices, 
$\Snpp \cap \Snt$, form a closed subset of $\Snpp$ since $\Snt$ is closed. Thus ${\mathcal U}
= F^{-1} (\Snpp\cap \Snt)$ is closed in $\mathcal{O}$. 

To show that ${\mathcal U}$ is also open, we introduce the set, 
\[
{\mathcal P} := \{ (p,q,r) \in {\mathbb R}_{++} \times {\mathbb R}^2 : p+qz^k+rz^{n-k} {\rm \ has \ all \ roots \ satisfy \ } |z|>1 \} .
\] 
Since the region $\lvert z \rvert > 1$ is an open subset of the complex plane, $\mathcal P$ is an open set.  We consider the map $G: {\mathcal P} \to \R^3$ defined as
\[
G(p,q,r) = ( [\Bez(p,q,r)^{-1}]_{11},  [\Bez(p,q,r)^{-1}]_{k1},  [\Bez(p,q,r)^{-1}]_{n-k,1} ),
\] 
where by abuse of notation $\Bez(p,q,r)$ is the Toeplitz Bezoutian of Theorem~\ref{thm:schurcohn}:
\begin{align*}
\Bez(p,q,r) &= (pJ^0+qJ^k+rJ^{n-k})(pJ^0+qJ^k+rJ^{n-k})^T \\
& \qquad - (rJ^k+qJ^{n-k})(rJ^k+qJ^{n-k})^T.
\end{align*}
Then $G$ is continuous and we show that its image is exactly $\mathcal{U}$.  By Theorem~\ref{thm:schurcohn}, for any $(p,q,r) \in \mathcal{P}$ we have
\[
\Bez(p,q,r) \in \Snpp, \ \Bez(p,q,r)^{-1} \in \Snpp \cap \Snt.
\]  
Thus $\Bez(p,q,r)^{-1}$ is a completion of the partial matrix having pattern $P_3$ and data $\{[\Bez(p,q,r)^{-1}]_{11},  [\Bez(p,q,r)^{-1}]_{k1},  [\Bez(p,q,r)^{-1}]_{n-k,1}\}$.  It follows that $G(\mathcal{P}) \subseteq \mathcal{O}$.  Moreover, expanding $\Bez(p,q,r)$ we obtain diagonal terms as well as terms of the form $J^0(J^T)^k$ and $J^0(J^T)^{n-k}$, where the coefficients have been omitted.  By Lemma~\ref{J}, $\Bez(p,q,r)$ has non-zero values only in entries of the diagonals $0,k,n-k$.  Note that the term $J^k(J^T)^{n-k}$ cancels out in the expansion.  Thus by Theorem~\ref{thm:maxdet}, $\Bez(p,q,r)^{-1}$ is a maximum determinant completion of the partial matrix with pattern $P_3$ and data $\{[\Bez(p,q,r)^{-1}]_{11},  [\Bez(p,q,r)^{-1}]_{k1},  [\Bez(p,q,r)^{-1}]_{n-k,1}\}$ and $G(\mathcal{P}) \subseteq \mathcal{U}$.  To show equality, let $(t_0,t_k,t_{n-k}) \in \mathcal{U}$ and let $F(t_0,t_k,t_{n-k})$, as above, be the maximum determinant completion of the partial matrix with pattern $P_3$ and data $\{t_0,t_k,t_{n-k}\}$ which is Toeplitz.  Let $f_0$, $f_k$, and $f_{n-k}$ be the $(1,1)$, $(k+1,1)$ and $(n-k+1,1)$ elements of $F(t_0,t_kt_{n-k})^{-1}$ respectively.  Then by the Gohberg-Semencul formula for the inversion of a symmetric Toeplitz matrix (see \cite{MR0353038,MR1038316}) we have
\begin{align*}
F(t_0,t_kt_{n-k})^{-1} &= \frac{1}{f_0}(f_0J^0+f_kJ^k+f_{n-k}J^{n-k})(f_0J^0+f_kJ^k+f_{n-k}J^{n-k})^T \\
& \qquad -  \frac{1}{f_0}(f_{n-k}J^k+f_kJ^{n-k})(f_{n-k}J^k+f_kJ^{n-k})^T, \\
&= \Bez \left(\sqrt{f_0}, \frac{f_k}{\sqrt{f_0}}, \frac{f_{n-k}}{\sqrt{f_0}}\right).
\end{align*}
Since $F(t_0,t_kt_{n-k})^{-1} \in \Snpp$, it follows that $\left(\sqrt{f_0}, \frac{f_k}{\sqrt{f_0}}, \frac{f_{n-k}}{\sqrt{f_0}}\right) \in \mathcal{P}$ and 
\[
G\left(\sqrt{f_0}, \frac{f_k}{\sqrt{f_0}}, \frac{f_{n-k}}{\sqrt{f_0}}\right) = (t_0,t_k,t_{n-k}).
\]
Therefore $G(\mathcal{P}) = \mathcal{U}$.  Moreover, from the above we have that
\[
G^{-1}(t_0,t_k,t_{n-k}) = \left(\sqrt{f_0}, \frac{f_k}{\sqrt{f_0}}, \frac{f_{n-k}}{\sqrt{f_0}}\right),
\]
with $f_0,f_k$, and $f_{n-k}$ defined above.  Since $G^{-1}$ is continuous, $G^{-1}(\mathcal{U}) = \mathcal{P}$, and $\mathcal{P}$ is an open set, we conclude that $\mathcal{U}$ is open, as desired.
\end{proof}

Now we are ready to prove Theorem~\ref{thm:main}.
\begin{proof}[Proof of Theorem~\ref{thm:main}]
The direction $(\ref{item:Prnk})\!\! \implies\!\! (\ref{item:parttoep})$ 
follows from Proposition~\ref{prop:P1P2} and Proposition~\ref{prop:P3}.

For the direction $(\ref{item:parttoep})\!\! \implies\!\!  (\ref{item:Prnk})$,
let $\TT$ be positive definite completable with pattern $P = \{k_1,\dotso,k_s\}$, $k_0 = 0$, and data $\{t_0,t_1,\dotso,t_s\}$. Assume there exists 
data $t_j$ for the diagonal $k_j$, $j \in \{0,\dotso,s\}$, and that $\Tstar$ is Toeplitz.  Then by Theorem~\ref{thm:maxdet}, $(\Tstar)^{-1}$ has nonzero entries only in the diagonals $P\cup \{0\}$ (and their symmetric counterparts).  We denote by $a_j$ the value of the first column of $(\Tstar)^{-1}$ in the row $k_j+1$ for all $j \in \{0,\dotso,s\}$, and define
\[
A := \sum_{j=0}^s a_j J^{k_j}, \ B:= \sum_{j=1}^s a_j \left( J^T\right)^{n-k_j}.
\]
The Gohberg-Semencul formula gives us that $T^{-1} = \frac{1}{a_0}(AA^T
- B^TB)$.  Substituting in the expressions for $A$ and $B$ and
expanding, we obtain $(\Tstar)^{-1}$ is a linear combination of the
following types of terms, along with their symmetric counterparts:
\[
J^{k_j}(J^T)^{k_j}, \quad  J^{k_0}(J^T)^{k_j}, \quad J^{k_j}(J^T)^{k_l} - J^{n-k_j}(J^T)^{n-k_l}, \ j \ne l.
\]
By Lemma~\ref{J}, the first type of term has nonzero entries only on the
main diagonal, and the second type of term has nonzero entries only on
the diagonals belonging to $P$.  The third type of term has nonzero
entries only on the diagonals $\pm \lvert k_j-k_l\rvert$.  As we have
already observed in the proof of Proposition~\ref{prop:P3}, the set of
data for which $\TT$ is positive definite completable is an open set. 
We may therefore perturb the data of $\TT$ so that the entries 
$a_0,\dotso,a_j$ of the inverse do not all lie on the same proper linear 
manifold. Then terms of the form $J^{k_j}(J^T)^{k_l} -
J^{n-k_j}(J^T)^{n-k_l}$ with  $j \ne l$ do not cancel each other out. We
conclude that, for each pair $j < l$, we have $k_j - k_l \in P$ or $J^{k_j}(J^T)^{k_l} - J^{n-k_j}(J^T)^{n-k_l} = 0$.  By Lemma~\ref{J} the second alternative is equivalent to $l = n-j$.  Using this observation we now proceed to show that $P$ has one of the specified forms.  

Let $1\le r \le s$ be the largest integer such that $\{k_1,\dotso,k_r\}$
is of the form $P_1$, i.e.,~$k_2 = 2k_1$, $k_3 = 3k_1$, etc\ldots.  If
$r=s$, then we are done. Therefore we may assume $s \ge r+1$. Now we show 
that in fact $s=r+1$. We have that $k_{r+1} - k_1 \in P$ or $k_{r+1} = n-k_1$.  
We show that the first case does not hold. Indeed if $k_{r+1} - k_1 \in
P$, then it follows that $k_{r+1} - k_1 \in \{k_1,\dotso,k_r\}$. This
implies that
\[
k_{r+1} \in \{2k_1,\dotso,rk_1, (r+1)k_1\} = \{k_2,\dotso,k_r,(r+1)k_1\}.
\]
Clearly $k_{r+1} \notin \{k_2,\dotso,k_r\}$, and if $k_{r+1} =
(r+1)k_1$, then $r$ is not maximal, a contradiction.  Therefore $k_{r+1}
= n-k_1$.  To show that $s=r+1$, suppose to the contrary that $s \ge
r+2$.  Then $k_{r+2} - k_1 \in P$ or $k_{r+2} = n-k_1$.  The latter does
not hold since then $k_{r+2} = k_{r+1}$.  Thus we have $k_{r+2} - k_1
\in \{k_1,\dotso,k_r, k_{r+1}\}$, which implies that
\[
k_{r+2} \in \{2k_1,\dotso,rk_1,(r+1)k_1,k_{r+1}+k_1\} = \{k_2,\dotso,k_r,k_r+k_1,n\}.
\] 
Since $k_{r+2} \notin \{k_2,\dotso,k_r,n\}$, we have $k_{r+2} =
k_r+k_1$. Therefore, since $k_r < k_{r+1}<k_{r+2}$, we have that $0<
k_{r+2} - k_{r+1} <k_1$, and moreover, $k_{r+2} - k_{r+1} \notin P$.  It follows that $k_{r+2} = n-k_{r+1} = k_1$, a contradiction.  

We have shown that $P = \{k_1,2k_1,\dotso,rk_1,k_s\}$ with $k_s =
n-k_1$. If $r=1$, then $P$ is of the form $P_3$.  On the other hand if
$r \ge 2$, then we observe that $\{k_s -k_r, \dotso,k_s - k_2\} \subseteq P$, or equivalently,
\[
\{k_s -k_r, \dotso,k_s - k_2\} \subseteq \{k_2,\dotso,k_r\}.
\] 
Since the above sets of identical cardinality, distinct increasing elements, 
we conclude that $k_s - k_2 = k_r$.  Rearranging, we obtain that $k_s = (r+2)k_1$ and $P$ is of the form $P_2$, as desired.
\end{proof}

\begin{rem}
The results of this section have been stated for the symmetric real case for 
simplicity and for application to \SDP in the following Section \ref{sec:sdp}.
With obvious modifications, our results extend to the Hermitian case.
\end{rem}

\section{Semidefinite Toeplitz Completions}
\label{sec:sdp}
In this section we extend the results of Theorem~\ref{thm:main} to
positive semidefinite completions.  In the case where all completions
are singular, the maximum determinant is not useful for identifying a
Toeplitz one, however, a recent result of~\cite{SWW:17} allows us to extend our observations to the semidefinite case.  Given a partial symmetric Toeplitz matrix, $\TT$, a positive semidefinite completion of $\TT$ may be obtained by solving an SDP feasibility problem.  Indeed, if $\TT$ has pattern $P$ and data $\{t_k : k \in P\cup \{0\}\}$, then the positive semidefinite completions of $\TT$ are exactly the set 
\begin{equation}
\label{eq:feasibleset}
\FF := \{X \in \Snp : \A(X) = b \},
\end{equation}
where $\A$ is a linear map and $b$ a real vector in the image space of $\A$ satisfying 
\[
[\A(X)]_{ik} = \langle E_{i,i+k}, X\rangle, \ b_{ik} = t_k, \quad i \in \{1,2,\dotso,n-k\}, \ k \in P\cup \{0\}.
\]
Here $E_{i,j}$ is the symmetric matrix having a one in the entries $(i,j)$ and $(j,i)$ and zeros everywhere else and we use the trace inner product: $\langle X,Y \rangle  = {\rm tr} ( XY )$.  The maximum determinant is used extensively in SDP, for example, the central path of interior point methods is defined by solutions to the maximum determinant problem.  If $\FF$ is nonempty but does not contain a positive definite matrix, the maximum determinant may still be applied by perturbing $\FF$ so that it does intersect the set of positive definite matrices.  Consider the following parametric optimization problem  
\[
X(\alpha) := \arg \max \ \{ \det(X) : X \in \FF(\alpha)\},
\]
where $\FF(\alpha) := \{X \in \Snp : \A(X) = b + \alpha \A(I)\}$ and $\alpha > 0$.  For each $\alpha > 0$, the solution $X(\alpha)$ is contained in the relative interior of $\FF(\alpha)$.  It is somewhat intuitive that if the limit of these solutions is taken as $\alpha$ decreases to $0$, we should obtain an element of the relative interior of $\FF(0) = \FF$.  Indeed, the following result confirms this intuition.  We denote by $\A^*$ the adjoint of $\A$.
\begin{thm}
\label{thm:Xalpha}
Let $\FF \ne \emptyset$ and $X(\alpha)$ be as above.  Then there exists $\bar{X}$ in the relative interior of $\FF$ such that $\lim_{\alpha \searrow 0}X(\alpha) = \bar{X}$. Moreover, $\bar{Z} := \lim_{\alpha \searrow 0}\alpha (X(\alpha))^{-1}$ exists and satisfies $\bar{X} \bar{Z} = 0$ and $\bar{Z} \in \range(\A^*)$.
\end{thm}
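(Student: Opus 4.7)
My plan is to treat $\{X(\alpha)\}_{\alpha>0}$ as a central-path trajectory for the perturbed feasibility system and analyze its limit via strict concavity of $\log\det$ together with first-order optimality. I would first check that each $X(\alpha)$ is well-defined. For any $X^{(0)} \in \FF$ the matrix $X^{(0)} + \alpha I$ is positive definite and satisfies $\A(X^{(0)} + \alpha I) = b + \alpha\A(I)$, so $\FF(\alpha) \cap \Snpp \neq \emptyset$. Because the main diagonal is specified in the Toeplitz setup, the linear constraint fixes ${\rm tr}(X) = n(t_0 + \alpha)$, so $\FF(\alpha)$ is compact and strictly concave $\log\det$ attains a unique maximum on it at an interior point $X(\alpha) \in \Snpp$. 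Writing out the Lagrange optimality conditions at this interior optimum gives $X(\alpha)^{-1} = \A^*(y(\alpha))$ for some $y(\alpha)$, hence $\alpha X(\alpha)^{-1} \in \range(\A^*)$ for every $\alpha>0$; since $\range(\A^*)$ is a closed subspace, any limit point is automatically in $\range(\A^*)$.

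Next I would extract subsequential limits and verify complementary slackness. The trace identity yields a uniform bound on $\|X(\alpha)\|$ for $\alpha \in (0,1]$, so any sequence $\alpha_k \searrow 0$ admits a subsequential limit $\bar X \in \FF$ by continuity of $\A$. Pick any $\tilde X$ in the relative interior of $\FF$ within the minimal face of $\Snp$ containing $\FF$, and let $p := \rank \tilde X$. The comparison $\tilde X + \alpha I \in \FF(\alpha) \cap \Snpp$ combined with optimality of $X(\alpha)$ yields
\[
\det X(\alpha) \;\ge\; \det(\tilde X + \alpha I) \;\ge\; c\,\alpha^{\,n-p}, \qquad c := \prod_{i=1}^{p} \lambda_i(\tilde X) > 0.
\]
Together with the trace-based upper bound on eigenvalues, this estimate forces the smallest $n-p$ eigenvalues of $X(\alpha)$ to decay at the sharp rate $\Theta(\alpha)$, while the remaining $p$ eigenvalues stay bounded away from zero. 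It follows that $\rank \bar X = p$, so $\bar X$ lies in the relative interior of $\FF$, and that $\alpha X(\alpha)^{-1}$ is uniformly bounded, hence (further) subsequentially $\alpha_k X(\alpha_k)^{-1} \to \bar Z \in \range(\A^*)$. Passing to the limit in the identity $X(\alpha)\bigl(\alpha X(\alpha)^{-1}\bigr) = \alpha I$ gives $\bar X \bar Z = 0$.

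The remaining task is to promote subsequential limits to a genuine limit. My plan is to characterize every cluster point $\bar X$ as the unique maximizer of $\log\det$ restricted to the relative interior of $\FF$ within its minimal face of $\Snp$. Restricted to this face the problem is Slater-feasible and strictly concave, so it has a unique optimizer; the needed variational characterization is obtained by passing to the limit in the KKT identity $X(\alpha)^{-1} = \A^*(y(\alpha))$ paired with the complementarity $\bar X \bar Z = 0$. Any two subsequential limits must then coincide, so $\bar X$ (and hence $\bar Z$) is independent of the sequence. I expect the principal obstacle to be the sharp eigenvalue-decay estimate $\mu_i(X(\alpha)) = \Theta(\alpha)$ on the vanishing block: the lower rate (ruling out $\mu_i \ll \alpha$) is what bounds $\alpha X(\alpha)^{-1}$, while the upper rate (ruling out $\mu_i \gg \alpha$ among the vanishing eigenvalues) is what places $\bar X$ in the relative interior of $\FF$ rather than on a proper sub-face. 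This strict-complementarity style estimate is the quantitative heart of the argument; once it is in hand, the remaining steps reduce to standard compactness, continuity, and concavity arguments.
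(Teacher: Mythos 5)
Your opening steps (well-posedness of $X(\alpha)$ via compactness of $\FF(\alpha)$, the identity $X(\alpha)^{-1}=\A^*(y(\alpha))$, hence $\alpha X(\alpha)^{-1}\in\range(\A^*)$, and the subsequential limit $\bar X\in\FF$) are fine, but the quantitative core of your argument has a genuine gap, and part of it is outright false. The two-sided estimate you assert --- that \emph{all} $n-p$ vanishing eigenvalues of $X(\alpha)$ decay at rate $\Theta(\alpha)$ --- is equivalent to strict complementarity, i.e.\ $\rank\bar X+\rank\bar Z=n$, and this fails in general: it is contradicted by the paper's own Examples~\ref{ex:4n} (with $\theta=\pi$) and~\ref{ex:generaln}, where $\rank\bar X+\rank\bar Z\le 3<n$, precisely because some vanishing eigenvalue of $X(\alpha)$ decays strictly slower than $\alpha$ (this is what makes the singularity degree exceed $1$). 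Moreover, the determinant comparison $\det X(\alpha)\ge c\,\alpha^{n-p}$ together with the trace bound does not imply either of the two conclusions you extract from it: eigenvalue profiles such as two eigenvalues of order $\alpha^{1/2}$ and $\alpha^{3/2}$ (or $p$-many eigenvalues all of order $\alpha^{(n-p)/(n-q)}$ with $q<p$) are consistent with your bounds, yet they would make $\alpha X(\alpha)^{-1}$ unbounded, respectively give $\rank\bar X<p$. So neither the boundedness of $\alpha X(\alpha)^{-1}$ nor $\bar X\in\operatorname{relint}\FF$ follows from your estimate. The standard repair is dual, not primal: from $X(\alpha)^{-1}=\A^*(y(\alpha))$ one gets $\langle X(\alpha)^{-1},W\rangle=n$ for every $W\in\FF(\alpha)$; taking $W=\hat X+\alpha I$ with $\hat X\in\operatorname{relint}\FF$ yields both ${\rm tr}\,(\alpha X(\alpha)^{-1})\le n$ (boundedness of $Z(\alpha)$, hence $\mu_{\min}(X(\alpha))\ge\alpha/n$) and $\langle X(\alpha)^{-1},\hat X\rangle\le n$, which forces $v^TX(\alpha)v$ to stay bounded away from $0$ for unit $v\in\range\hat X$ and hence every cluster point to lie in $\operatorname{relint}\FF$; it also gives $\langle\bar Z,\hat X\rangle=0$ and thus $\bar X\bar Z=0$.

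Your final step is also not yet a proof: to upgrade cluster points to an actual limit you propose to ``pass to the limit in $X(\alpha)^{-1}=\A^*(y(\alpha))$,'' but $X(\alpha)^{-1}$ diverges when $\FF\cap\Snpp=\emptyset$; only $\alpha X(\alpha)^{-1}\to\bar Z$ survives, and $\bar Z$ together with $\bar X\bar Z=0$ does not single out a point of the minimal face, so your claimed characterization of every cluster point as the face-restricted $\log\det$ maximizer is unsupported (and its validity is not obvious; it would require a finer asymptotic expansion of $X(\alpha)^{-1}$ or a sensitivity result in the spirit of Sturm--Zhang). Note that the paper does not prove this theorem internally at all --- it cites Section~3 of \cite{SWW:17} --- so a self-contained argument is welcome, but it must supply exactly these missing pieces rather than the strict-complementarity rate, which cannot hold.
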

\begin{proof}
See Section~3 of~\cite{SWW:17}.
\end{proof}
An immediate consequence of this result is the following.
\begin{cor}
\label{cor:sdp}
Let $\TT$ be an $n\times n$ partial symmetric Toeplitz matrix of the form $P_1$, $P_2$, or $P_3$.  If $\TT$ admits a positive semidefinite completion then it admits a maximum rank completion that is Toeplitz.
\end{cor}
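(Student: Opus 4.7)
The plan is to combine Theorem \ref{thm:main} with the continuity of the maximum-determinant path provided by Theorem \ref{thm:Xalpha}. The first observation is that, for every $\alpha > 0$, the perturbed feasibility set $\FF(\alpha)$ is itself the positive semidefinite completion set of a partial Toeplitz matrix $\TT(\alpha)$ having the same pattern $P$ as $\TT$, with the off-diagonal data $t_k$ for $k \in P$ unchanged and diagonal value $t_0 + \alpha$. This is immediate from the form of the map $\A$: the identity $I$ contributes $1$ to every diagonal constraint and $0$ to every off-diagonal constraint, since $\langle E_{i,i+k}, I\rangle = 0$ whenever $k \neq 0$.

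Next I would verify that $\TT(\alpha)$ is positive definite completable for each $\alpha > 0$. This is easy: if $X \in \FF$ is any positive semidefinite completion of $\TT$, then $X + \alpha I$ lies in $\FF(\alpha)$ and is positive definite. Since $P$ has one of the three forms $P_1$, $P_2$, or $P_3$, Theorem \ref{thm:main} applies and yields that the unique maximum determinant completion $X(\alpha)$ of $\TT(\alpha)$ is Toeplitz for every $\alpha > 0$.

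Finally I would invoke Theorem \ref{thm:Xalpha} to produce $\bar X = \lim_{\alpha \searrow 0} X(\alpha)$, which lies in the relative interior of $\FF$. The relative interior of the spectrahedron $\FF$ consists precisely of its elements of maximum rank, since every face of $\Snp$ is characterized by a common range, and all matrices in the relative interior share the largest such range. Because the symmetric Toeplitz matrices form a closed linear subspace of $\Sn$ and each $X(\alpha)$ is Toeplitz, the limit $\bar X$ is Toeplitz as well, giving the desired maximum rank Toeplitz positive semidefinite completion of $\TT$.

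The main obstacle is essentially a bookkeeping one: one must check that the specific perturbation appearing in Theorem \ref{thm:Xalpha}, namely $b \mapsto b + \alpha \A(I)$, acts on the data of $\TT$ in a way that preserves both the Toeplitz structure and the pattern. Once this observation is in hand, the corollary reduces to a direct assembly of Theorem \ref{thm:main} and Theorem \ref{thm:Xalpha}, with no further technicalities.
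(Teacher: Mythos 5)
Your proposal is correct and follows essentially the same route as the paper: perturb the diagonal data by $\alpha$, apply Theorem \ref{thm:main} to get a Toeplitz maximum determinant completion $X(\alpha)$ for each $\alpha>0$, and then use Theorem \ref{thm:Xalpha} together with the closedness of the Toeplitz matrices and the fact that the relative interior of $\FF$ consists of the maximum rank completions. The only (harmless) differences are that you treat both the definite-completable and singular cases uniformly via the limit, and that you spell out the bookkeeping (the $X+\alpha I$ Slater point and the action of $\alpha\A(I)$ on the data) that the paper leaves implicit.
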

\begin{proof}
Let $\TT$ be as in the hypothesis with data $\{t_0,t_1, \dotso, t_s\}$ and let $\FF$ be the set of positive semidefinite completions, as above.  If $\FF \cap \Snpp \ne \emptyset$, then the maximum determinant completion is Toeplitz by Theorem~\ref{thm:main} and is of maximum rank.  Now suppose $\FF \in \Snp \setminus \Snpp$ and observe that for every $\alpha > 0$, $\FF(\alpha)$ consists of solutions to to the completion problem having pattern $P_1$, $P_2$, or $P_3$ with data  $\{t_0+\alpha,t_1, \dotso, t_s\}$ and there exists a positive definite completion.  Thus $X(\alpha)$ is Toeplitz for each $\alpha > 0$ and since the Toeplitz matrices are closed, the limit point $\bar{X}$, of Theorem~\ref{thm:Xalpha}, is Toeplitz.  The relative interior of $\FF$ corresponds to those matrices having maximum rank over all of $\FF$, hence $\bar{X}$ has maximum rank, as desired.
\end{proof}
\begin{rem}
In Theorem~2.2 of \cite{MR1444091} the author gives in the case of two prescribed diagonals (in the strict lower triangular part) necessary and sufficient conditions on the data for the existence of a Toeplitz positive semidefinite completion. In Theorem~10 of \cite{MR2156414} the authors give in the case of pattern $P_3$ necessary and sufficient conditions for the existence of a positive semidefinite completion. If one is able to verify that the conditions are the same, which will require some tenacity, then one would have an alternative proof that for the pattern $P_3$ positive semidefinite completability implies the existence of a Toeplitz positive semidefinite completion. Their results are all stated for the real case, so one advantage of the approach here is that it readily generalizes to the complex Hermitian case.
\end{rem}
While Theorem~\ref{thm:main} characterizes patterns for which the maximum determinant completion is automatically Toeplitz and Corollary~\ref{cor:sdp} addresses the maximum rank completions, one may merely be interested in the existence of a Toeplitz completion when a positive semidefinite one exists. Obviously, the patterns in Theorem \ref{thm:main} fall in this category, but as we see in the following result, there are more. 

\begin{thm}
\label{thm:main2}
Define the patterns
\begin{itemize}
\item $P_2' := \{k,2k,\dotso,(r-2)k,rk\}$,  
\item $P_3' := \{k,r\}$ where $n \ge k+r$.
\end{itemize}
If $\TT$ is an $n\times n$ positive semidefinite completable partial Toeplitz matrix with a pattern in the set $\{P_1,P_2',P_3'\}$, then $\TT$ has a Toeplitz positive semidefinite completion.
\end{thm}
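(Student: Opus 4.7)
The plan is to handle each of the three patterns in turn, reducing each case to one already covered by Corollary~\ref{cor:sdp}. The two basic tools are the passage to a leading principal submatrix of the right size (where the pattern becomes $P_2$ or $P_3$ exactly) and the classical Carath\'eodory--Toeplitz extension theorem, which asserts that any $m \times m$ positive semidefinite Toeplitz matrix admits an extension to an $(m+1) \times (m+1)$ positive semidefinite Toeplitz matrix (and, by iteration, to any larger size). The case $P_1$ is immediate from Corollary~\ref{cor:sdp}.

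For $P_3' = \{k,r\}$ with $n \ge k+r$, I would take the leading $(k+r) \times (k+r)$ principal submatrix $\TT'$ of $\TT$; it has pattern $\{k,r\}$ in size $k+r$, which is precisely $P_3$ since $(k+r)-k = r$. The restriction of any PSD completion of $\TT$ to this corner is a PSD completion of $\TT'$, so Corollary~\ref{cor:sdp} produces a Toeplitz PSD completion $T'$ of $\TT'$. Iterating Carath\'eodory--Toeplitz extends $T'$ to an $n \times n$ Toeplitz PSD matrix $T$. Since $T$ is Toeplitz and its leading $(k+r) \times (k+r)$ corner equals $T'$, it agrees with $\TT$ on the specified diagonals $0, k, r$.

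For $P_2' = \{k, 2k, \dotso, (r-2)k, rk\}$, I would split on the relation of $n$ to $(r+1)k$. If $n \ge (r+1)k$, the leading $(r+1)k \times (r+1)k$ principal submatrix has pattern exactly $P_2$, and the same submatrix-plus-Carath\'eodory argument produces a Toeplitz PSD completion. If $rk + 1 \le n < (r+1)k$, write $n = rk + s$ with $1 \le s < k$ and apply the residue-mod-$k$ permutation $Q$ from the proof of Proposition~\ref{prop:P1P2}. The specified entries of $\TT$ then occupy $s$ diagonal blocks of size $r+1$ with pattern $\{1,\dotso,r-2,r\}$ (which is $P_2$ in size $r+1$) and $k-s$ diagonal blocks of size $r$ with pattern $\{1,\dotso,r-2\}$ (which is $P_1$ in size $r$). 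Each block is PSD completable (take the corresponding principal submatrix of any PSD completion of $\TT$), so Corollary~\ref{cor:sdp} furnishes a Toeplitz PSD completion $T^{(r+1)}$ of one size-$(r+1)$ block; I would then complete every size-$r$ block by the leading $r \times r$ principal submatrix of $T^{(r+1)}$, which is itself Toeplitz PSD and matches that block's data on diagonals $\{0,1,\dotso,r-2\}$. Setting the off-block entries to zero as in Lemma~\ref{lem:block} yields a block-diagonal PSD matrix; after permuting back, all diagonals that are multiples of $k$ carry a single common value (determined by $T^{(r+1)}$) and all other diagonals are zero, so the resulting completion is Toeplitz.

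The main obstacle is the subcase $rk+1 \le n < (r+1)k$ of $P_2'$, where no leading-principal-submatrix reduction yields a single $P_2$ problem and two block types must be made compatible on the free diagonal (diagonal $r-1$ of each block, corresponding to diagonal $(r-1)k$ of $\TT$). The key observation that unlocks this subcase is that the leading $r \times r$ principal submatrix of any Toeplitz PSD completion of the size-$(r+1)$ $P_2$-block is automatically a valid Toeplitz PSD completion of the size-$r$ $P_1$-block with the same underlying data, which forces the two block types to agree on the only diagonal where they could otherwise disagree.
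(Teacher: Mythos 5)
Your proof is correct, and its skeleton is the same as the paper's: reduce to a leading principal submatrix whose pattern is exactly $P_2$ or $P_3$, apply Corollary~\ref{cor:sdp} there, and then propagate the resulting Toeplitz data to size $n$. The differences are in the propagation step and in one subcase. The paper, after completing the $(r+1)k\times(r+1)k$ corner, copies the new diagonal values into $\TT$ to obtain a banded partial Toeplitz matrix and then invokes the $P_1$ case again; you instead extend the completed corner directly by iterated one-step Carath\'eodory--Toeplitz extensions. These are essentially equivalent (a one-step Toeplitz extension is exactly the banded pattern with a single free entry), but your route has the small advantage of making explicit the positive semidefinite completability of the filled-in banded matrix, which the paper leaves implicit. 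The more substantive difference is your subcase $rk+1\le n<(r+1)k$ of $P_2'$: the paper's proof starts from ``the first $(r+1)k$ rows and columns'' and thus tacitly reads $P_2'$ as having $n\ge(r+1)k$ (consistent with its phrase ``allowing $n$ to be larger''), so it never addresses smaller $n$; your residue-mod-$k$ block decomposition, with the $(r+1)$-blocks of pattern $P_2$ (for $k=1$) completed by one Toeplitz completion $T^{(r+1)}$ and the $r$-blocks completed by its leading corner, correctly covers the literal statement of the theorem in that range and is a genuine addition rather than a repair. Your handling of $P_3'$ (corner of size $k+r$, then extension) is likewise correct and is the case the paper only sketches as ``analogous.''
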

\begin{proof}
For pattern $P_1$ this is a consequence of Corollary~\ref{cor:sdp}.
Note that $P_2'$ and $P_3'$ are obtained from $P_2$ and $P_3$,
respectively, by relaxing the restriction on $n$, i.e.,~allowing $n$ to be larger.  Using the results we already have for $P_2$ and $P_3$ we fill in some of the diagonals of $\TT$ to obtain a new partial matrix of the form $P_1$.  We show the proof only for patterns of the form $P_2'$ since the same approach may be used for patterns of the form $P_3'$.

Suppose $\TT$ has pattern $P_2'$ and consider the partial submatrix containing the first $(r+1)k$ rows and columns.  This partial matrix is Toeplitz, has a positive semidefinite completion, and has pattern $P_2$.  Let $U  := \{1,\dotso,(r+1)k-1\} \setminus P_2$.  The elements of $U$ correspond to the unspecified diagonals of the submatrix.  By Corollary~\ref{cor:sdp}, there exists a Toeplitz completion for this submatrix that assigns the value $a_i$ for every $i \in U$.  Now $U$ is a subset of the unspecified diagonals of $\TT$.  We assign the value $a_i$ to the unspecified diagonals of $\TT$ for every $i \in U$ thereby obtaining a new partial positive semidefinite Toeplitz matrix, say $\TT'$, with pattern $\{1,2,\dotso,(r+1)k\}$.  Since $\TT$ and $\TT'$ agree on the diagonals of $P_2'$, every completion of $\TT'$ is also a completion of $\TT$.  The pattern of $\TT'$ is of the form $P_1$, hence it admits a positive semidefinite Toeplitz completion, which is also a completion of $\TT$, as desired.
\end{proof}



Whether or not Theorem \ref{thm:main2} gives a full characterization of
all patterns for where there is always a Toeplitz completion among all positive semidefinite completions, is an open question.

\section{The Singularity Degree of Some Toeplitz Cycles}
\label{sec:sd}
The \emph{Slater condition} holds for the feasible set of an SDP if it
contains a positive definite matrix.  If the Slater condition does not
hold for an SDP then there is no guarantee of convergence to an optimal
solution using any known algorithm, moreover, it may not be possible to
verify if a given matrix is optimal or not.  One way to regularize an
SDP that does not satisfy the Slater condition is by restricting the
problem to the smallest face of $\Snp$ containing the feasible set.
Since every face of $\Snp$ is a smaller dimensional positive
semidefinite cone, every SDP may be transformed into an equivalent
(possibly smaller dimensional) SDP for which the Slater condition holds.
This transformation is referred to as \emph{facial reduction}, see for
instance \cite{bw1,bw2,DrusWolk:16}.  The challenge, of course, is to
obtain the smallest face.  Most facial reduction algorithms look for
\emph{exposing vectors}, i.e.,~non-zero, positive semidefinite matrices that are orthogonal to the minimal face.  Exposing vectors are guaranteed to exist by the following theorem of the alternative.  Here we let $\FF$ be the feasible set of an SDP that is defined by the affine equation $\A(X) = b$, as in \eqref{eq:feasibleset}.
\begin{thm}[{\cite{bw3}}]
\label{thm:fr}
Exactly one of the following holds.
\begin{enumerate}
\item $\FF \cap \Snpp \ne \emptyset$.
\item  \label{item:expZ}
There exists $Z \in \Snp \cap \range (\A^*)$ such that $ZX = 0$ for all $X \in \FF$.
\end{enumerate}
\end{thm}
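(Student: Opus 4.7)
The plan is to prove this as a Farkas-style theorem of the alternative for semidefinite feasibility, via convex separation combined with self-duality of $\Snp$. I note first that the dichotomy as stated must implicitly require the exposing vector $Z$ in (2) to be nonzero: otherwise $Z=0$ trivially satisfies (2), defeating the ``exactly one'' assertion. With that understanding in place, I would handle the two directions separately.

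The easy direction, mutual exclusivity of (1) and (2), I would dispatch by a quick inner-product computation. If $X \in \FF \cap \Snpp$ and $Z = \A^*(y) \in \Snp$ with $Z \ne 0$ and $ZX = 0$, then $\langle Z, X \rangle = {\rm tr}(ZX) = 0$, but $X \succ 0$ together with $Z \succeq 0$ forces $Z = 0$, a contradiction.

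For the main direction, I would assume (1) fails, so $b \notin \A(\Snpp)$. Since $\FF \ne \emptyset$ we have $b \in \range(\A)$, and $\A(\Snpp)$ is a convex subset of $\range(\A)$ not containing $b$. I would invoke a proper hyperplane separation inside $\range(\A)$ to produce a nonzero $y \in \range(\A)$ with $\langle y, \A(X)\rangle \ge \langle y, b\rangle$ for every $X \in \Snpp$. Exploiting the cone structure of $\Snpp$ via $X \mapsto tX$ with $t \to \infty$ and $t \to 0$ yields, respectively, $\langle \A^*(y), X\rangle \ge 0$ on all of $\Snpp$ (hence on $\Snp$ by continuity, so $Z := \A^*(y) \in \Snp$ by self-duality) and $\langle y, b \rangle \le 0$. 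For $X \in \FF$ this gives $\langle Z, X\rangle = \langle y, b\rangle \le 0$, while $Z, X \succeq 0$ forces $\langle Z, X\rangle \ge 0$; so $\langle Z, X\rangle = 0$, which for PSD matrices is equivalent to $ZX = 0$ via ${\rm tr}((Z^{1/2}X^{1/2})(Z^{1/2}X^{1/2})^T) = 0$.

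The hard part will be ensuring $Z$ is nonzero. A naive separation in the ambient Euclidean space only produces some nonzero $y$ that could a priori lie in $\ker(\A^*) = \range(\A)^\perp$, collapsing $Z$ to zero. The fix is twofold: restrict the separation argument to the subspace $\range(\A)$ (which contains $b$) so that $y \in \range(\A)$, and use proper separation, available because $\A(\Snpp)$ has nonempty relative interior in $\range(\A)$ as the image of an open cone. Then the restriction of $\A^*$ to $\range(\A)$ is injective (its kernel equals $\range(\A)^\perp$), so $y \ne 0$ translates into $Z = \A^*(y) \ne 0$, as required.
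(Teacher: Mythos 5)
Your argument is correct, but be aware that the paper itself contains no proof of this theorem: it is stated with a citation to \cite{bw3}, so there is no in-paper argument to match. What you give is essentially the standard self-contained derivation of this SDP theorem of the alternative by convex separation plus self-duality of $\Snp$ (close in spirit to how it appears in the facial reduction literature, e.g.\ \cite{DrusWolk:16}), whereas \cite{bw3} obtains it within a general theory of abstract cone-convex programs via minimal cones and faces; your route buys elementarity and makes explicit where each hypothesis enters. Two of those hypotheses deserve emphasis. First, you are right that alternative (2) must be read with $Z \ne 0$; otherwise $Z=0$ makes (2) vacuous and the ``exactly one'' claim fails, and the nonzero requirement is indeed part of the result being cited. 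Second, your step ``since $\FF \ne \emptyset$ we have $b \in \range(\A)$'' is not cosmetic: without some such consistency assumption the dichotomy is false (if $b \notin \range(\A)$ one can have $\FF = \emptyset$ while $\Snp \cap \range(\A^*) = \{0\}$, so both alternatives fail), so you are adding the standing hypothesis $\FF \ne \emptyset$, which is how the theorem is used in the paper. Finally, the separation step can be justified a bit more cleanly than ``image of an open cone has nonempty relative interior'': since relative interior commutes with linear images of convex sets, $\A(\Snpp) = \A(\mathrm{ri}\,\Snp) = \mathrm{ri}\,\A(\Snp)$ is relatively open with affine hull equal to $\range(\A)$, so $b \notin \A(\Snpp)$ lets you separate inside $\range(\A)$ with some nonzero $y \in \range(\A)$, and injectivity of $\A^*$ on $\range(\A) = \ker(\A^*)^{\perp}$ then gives $Z = \A^*(y) \ne 0$, exactly as you say. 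The scaling argument yielding $Z \in \Snp$ and $\langle y, b\rangle \le 0$, and the trace argument converting $\langle Z, X\rangle = 0$ into $ZX = 0$, are both fine, so the proposal stands as a complete proof of the cited result under the (intended) assumptions $\FF \ne \emptyset$ and $Z \ne 0$.
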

This result guarantees the existence of exposing vectors when the Slater
condition does not hold.  By restricting the feasible set of an SDP to
the kernel of an exposing vector, the dimension of the SDP is reduced.
By repeatedly finding exposing vectors and reducing the size of the SDP,
eventually the problem is reduced to the minimal face and the Slater
condition holds.  If the exposing vector obtained at each iteration is
as in Item \ref{item:expZ} of Theorem~\ref{thm:fr} and of maximal rank over all such exposing vectors, then the number of times the original SDP needs to be reduced in order to obtain a regularized SDP is referred to as the \emph{singularity degree}.  
This notion and the connection to error bounds for SDP was introduced in 
\cite[Sect. 4]{S98lmi}.  For instance, if an SDP satisfies the Slater
condition, then it has singularity degree $0$ and the singularity degree is $1$ if and only if there exists an exposing vector $Z \in \Snp \cap \range(\A^*)$ such that $\rank(Z) + \rank(X) = n$ for all $X$ in the relative interior of $\FF$.

In \cite[Lemma~3.4]{tanigawa} it is shown that for $n\ge 4$, there exists a partial matrix (not Toeplitz) with all entries of the diagonals $0,1,n-1$ specified so that the singularity degree of the corresponding SDP is at least 2.  Here we apply the results of the previous sections to derive the singularity degree (or bounds for it) of a family of symmetric partial Toeplitz matrices with pattern $P = \{1,n-1\}$.  As in much of the matrix completion literature the partial matrix is viewed as arising from a graph and the pattern $P$ corresponds to the graph of a \emph{cycle} with loops.  The following result is useful throughout.
\begin{prop} 
\label{Tinverse}
Let $T=(t_{i-j})_{i,j=1}^n$ be a positive definite Toeplitz matrix, and suppose that $(T^{-1})_{k,1}=0$ for all $k\in \{3,\ldots , n-1\}$. Then $T^{-1}$ has the form 
\begin{equation}\label{CE} \begin{bmatrix}
a & c & 0& & d \\
c & b & c & \ddots & \\
0& c & b & \ddots &0 \\
& \ddots & \ddots & \ddots & c \\
d & & 0& c & a
\end{bmatrix},\end{equation}
with $b=\frac{1}{a} (a^2+c^2-d^2)$.
\end{prop}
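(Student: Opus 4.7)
My plan is to apply the Gohberg--Semencul formula exactly as in the proofs of Proposition~\ref{prop:P3} and Theorem~\ref{thm:main}, combined with the diagonal-bookkeeping provided by Lemma~\ref{J}, to expand $T^{-1}$ directly in terms of the three nonzero entries of its first column.

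First I would set $a := (T^{-1})_{11}$, $c := (T^{-1})_{21}$, and $d := (T^{-1})_{n,1}$, so that by the zero hypothesis the first column of $T^{-1}$ is $(a,c,0,\ldots,0,d)^T$. Since $T$ is symmetric Toeplitz we have $KTK = T$, hence $KT^{-1}K = T^{-1}$, and the persymmetry of $T^{-1}$ lets me read off the last column as $(d,0,\ldots,0,c,a)^T$. At this point the Gohberg--Semencul formula in the form used in Section~\ref{sec:main} gives
\[
T^{-1} = \frac{1}{a}\bigl(A A^T - B^T B\bigr),
\]
with
\[
A = aI + cJ + d J^{n-1}, \qquad B = c(J^T)^{n-1} + d J^T.
\]

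Next I would expand $AA^T - B^T B$ term by term, obtaining nine summands from $AA^T$ and four from $B^T B$. The key point is that the four mixed $cd$-terms, namely $cd\, J(J^T)^{n-1}$ and $cd\, J^{n-1} J^T$, occur with identical coefficients in both pieces and cancel. The surviving expression is
\[
A A^T - B^T B = a^2 I + ac(J + J^T) + ad\bigl(J^{n-1} + (J^T)^{n-1}\bigr) + (c^2 - d^2)\bigl(JJ^T - J^{n-1}(J^T)^{n-1}\bigr).
\]
By Item~\ref{item:nnzJ} of Lemma~\ref{J}, $J+J^T$ has nonzero entries only on diagonals $\pm 1$; $J^{n-1}+(J^T)^{n-1}$ only at the two corners $(1,n)$ and $(n,1)$; and both $JJ^T$ and $J^{n-1}(J^T)^{n-1}$ are diagonal, equal respectively to $\mathrm{diag}(0,1,\ldots,1)$ and $\mathrm{diag}(0,\ldots,0,1)$, so their difference is $\mathrm{diag}(0,1,\ldots,1,0)$.

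Dividing by $a$ then recovers exactly the claimed pattern: off-diagonal entries are $c$ on the first sub/super-diagonal, $d$ at the two corners, and $0$ elsewhere, while the main diagonal carries $a$ at positions $(1,1)$ and $(n,n)$ and $\tfrac{1}{a}(a^2+c^2-d^2)$ in all intermediate positions, giving the stated formula for $b$. The only place that calls for real care, rather than invocation of a named lemma, is the bookkeeping in Step~3 --- specifically, confirming that all cross-terms that could contribute on diagonals $\pm 2, \ldots, \pm(n-2)$ cancel between $AA^T$ and $B^T B$. Once that is checked, the structure falls out immediately from Lemma~\ref{J}.
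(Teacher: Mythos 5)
Your proposal is correct and follows essentially the same route as the paper: both invoke the Gohberg--Semencul formula with $A = aI + cJ + dJ^{n-1}$ and $B = dJ^T + c(J^T)^{n-1}$ built from the first column $(a,c,0,\ldots,0,d)^T$ of $T^{-1}$, and read off the form \eqref{CE} from the expansion of $\frac{1}{a}(AA^T - B^TB)$. The only difference is that you carry out the cancellation of the $cd$ cross-terms and the diagonal bookkeeping explicitly (correctly), whereas the paper states the formula and the matrices $A$, $B$ and leaves that expansion to the reader.
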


\begin{proof} 
Let us denote the first column of $T$ by $\begin{bmatrix} a & c & 0 &
\cdots & 0 & d \end{bmatrix}^T$. By the Gohberg-Semencul
formula we have that
$$ T^{-1} =\frac{1}{a} ( AA^T-B^TB ), $$
where 
$$ A=\begin{bmatrix}
a & 0 & 0& & 0 \\
c & a & 0 & \ddots & \\
0& c & a & \ddots &0 \\
& \ddots & \ddots & \ddots & 0 \\
d & & 0& c & a
\end{bmatrix}, B= \begin{bmatrix}
0 & d & 0& & c \\
0 & 0 & d & \ddots & \\
0& 0 & 0 & \ddots &0 \\
& \ddots & \ddots & \ddots & d \\
0& & 0& 0 & 0
\end{bmatrix}.$$
\end{proof}

\begin{ex}
\label{ex:4n}
Let $n=4$ and consider the partial matrix with pattern $P = \{1,3\}$ and data $\{t_0,t_1,t_3\} = \{1+\alpha,\cos(\frac{\theta}{3}), \cos(\theta)\}$ for $\theta \in [0,\pi]$ and $\alpha \ge0$.  Let $\FF(\alpha)$ denote the set of positive semidefinite completions for each $\alpha > 0$ as in Section~\ref{sec:sdp}, let $\FF = \FF(0)$, and let $\sd(\FF)$ denote the singularity degree of any SDP for which $\FF$ is the feasible set.  By Corollary~6 of \cite{MR1236734} there exists a positive definite completion whenever $\alpha > 0$ and there exists a positive semidefinite completion (but not a positive definite one) when $\alpha = 0$.  Then by Theorem~\ref{thm:main} the maximum determinant completion is Toeplitz whenever $\alpha  > 0$ and there exists a maximum rank positive semidefinite completion that is Toeplitz when $\alpha = 0$ by Corollary~\ref{cor:sdp}.  Let $X(\alpha)$ denote the maximum determinant positive definite completion when $\alpha > 0$.  Then 
\[
X(\alpha) =: \begin{pmatrix} 1 + \alpha &   \cos(\frac{\theta}{3}) & x(\alpha) & \cos(\theta) \cr  
\cos(\frac{\theta}{3}) & 1+\alpha  & \cos(\frac{\theta}{3}) & x(\alpha)  \cr 
x(\alpha)  &  \cos(\frac{\theta}{3}) & 1+\alpha &  \cos(\frac{\theta}{3}) \cr 
\cos(\theta) & x(\alpha)  &  \cos(\frac{\theta}{3}) & 1+\alpha \end{pmatrix}.
\] 
Here $x(\alpha)$ denotes the value of the unspecified entry.  Using the symbolic package in MATLAB, we obtain 
\[
x(\alpha) = \frac 12 \left( \sqrt{ \alpha(\alpha+2) + (4\cos^2(\frac{\theta}{3}) - 1)^2 } - (1+\alpha) \right).
\] 
Taking the limit as $\alpha$ decreases to $0$, we get
\[
\bar{X} := \lim_{\alpha \searrow 0} X(\alpha) = \begin{pmatrix} 1 &   \cos(\frac{\theta}{3}) & \cos(\frac{2\theta}{3}) & \cos(\theta) \cr  
\cos(\frac{\theta}{3}) & 1  & \cos(\frac{\theta}{3}) & \cos(\frac{2\theta}{3})  \cr 
\cos(\frac{2\theta}{3})  &  \cos(\frac{\theta}{3}) & 1 &  \cos(\frac{\theta}{3}) \cr 
\cos(\theta) & \cos(\frac{2\theta}{3})  &  \cos(\frac{\theta}{3}) & 1 \end{pmatrix}.
\]
This matrix has maximum rank over all positive semidefinite completions when $\alpha = 0$ due to Corollary~\ref{cor:sdp}.  Specifically, $\bar{X}$ has rank 2 whenever $\theta \in (0,\pi]$ and rank 1 when $\theta = 0$.
To derive the singularity degree of $\FF$ we need to find the maximal rank of an exposing vector having the properties of Theorem~\ref{thm:fr}.  To this end let $Z(\alpha) := \alpha X(\alpha)^{-1}$ and let $\bar{Z} = \lim_{\alpha \searrow 0} Z(\alpha)$.  By Theorem~\ref{thm:Xalpha}, $\bar{Z}$ exists and is an exposing vector for $\FF$ (as long as it is not the zero matrix) as in Theorem~\ref{thm:fr}.  By Proposition~\ref{Tinverse} we have
\begin{equation}
\label{eq:Zalpha}
Z(\alpha) =: \begin{pmatrix} a(\alpha) &   c(\alpha) & 0 & d(\alpha) \cr  
c(\alpha) & b(\alpha)  & c(\alpha) & 0 \cr 
0 &  c(\alpha) & b(\alpha) &  c(\alpha) \cr 
d(\alpha) & 0 &  c(\alpha) & a(\alpha) \end{pmatrix}, 
\end{equation}
where $b(\alpha) = \frac{1}{a(\alpha)}(a(\alpha)^2 + c(\alpha)^2 - d(\alpha)^2)$.  Let $a,b,c,$ and $d$ be the limit points of $a(\alpha),b(\alpha),c(\alpha),$ and $d(\alpha)$ respectively, as $\alpha$ decreases to $0$.  Then 
\[
\bar{Z} = \begin{pmatrix} a &   c & 0 & d \cr  
c & b  & c & 0 \cr 
0 &  c & b &  c \cr 
d & 0 &  c & a \end{pmatrix}.
\]
We observe that if $b \ne 0$, then $\rank(\bar{Z}) \ge 2$ and if $b = 0$ then $\rank(\bar{Z}) \le 1$.  The first observation is trivial, while for the second observation, suppose $\bar{Z} \ne 0$ from which we get that $a  > 0$.  Then since $\bar{Z}$ is positive semidefinite, we have $c = 0$ and from the equation $b = \frac{1}{a}(a^2 + c^2 - d^2)$ we get
\[
0 = \frac{1}{a}(a^2 -d^2),
\]
which implies that $a = d$ and $\rank(\bar{Z})= 1$.  Now since $X(\alpha)Z(\alpha) = \alpha I$ we have
\[
X(\alpha) \begin{bmatrix}
c(\alpha) \\
b(\alpha) \\
c(\alpha) \\
0
\end{bmatrix} = \alpha \begin{bmatrix}
0 \\
\alpha \\
0 \\
0
\end{bmatrix}.
\]
Solving for $b(\alpha)$ we obtain the expression
\[
b(\alpha) = \begin{cases} 
\frac{\alpha(\cos(\frac{\theta}{3}) + \cos(\theta))}{(1+\alpha)(\cos(\frac{\theta}{3}) + \cos(\theta))-2x(\alpha)\cos(\frac{\theta}{3})}, \quad &\theta \in [0,\pi], \ \theta \ne \frac{3\pi}{4}, \\
\frac{\alpha(1+\alpha+x(\alpha))}{(1+\alpha)(1+\alpha +x(\alpha)) - 2\cos^2(\theta)}, &\theta = \frac{3\pi}{4}.
\end{cases} 
\]
Evaluating the limits we get that $b = 0$ if $\theta = \pi$ and $b$ is non-zero for all other values of $\theta$ in $[0,\pi]$.  It follows that $\rank(\bar{Z}) \ge 2$ when $\theta \in (0,\pi)$ and since $\rank(\bar{X}) =2$ for these values of $\theta$, we conclude that $\sd(\FF) = 1$ when $\theta \in (0,\pi)$.  When $\theta = 0$ it can be derived that $a = b= \frac 34$ and $c = d = -\frac 38$.  Then $\bar{Z}$ is a rank 3 matrix and $\sd(\FF) = 1$.  For the case $\theta = \pi$ we have that $\rank(\bar{Z}) \le 1$ and now we show that every exposing vector for $\FF$ that lies in $\Snp \cap \range( \A^*)$ has rank at most 1.  Indeed, for $\theta = \pi$ we have 
\[
\bar{X} = \begin{pmatrix} 1 &   \frac12 & - \frac12 & -1 \cr  \frac12 & 1  & \frac12 & - \frac12 \cr 
- \frac12 &  \frac12 & 1 &  \frac12 \cr -1 & - \frac12 &  \frac12 & 1 \end{pmatrix} .
\]
Now a basis for the kernel of $\bar{X}$ is formed by the vectors
\[
v := \begin{bmatrix}
1 \\ 0 \\ 0 \\ -1
\end{bmatrix}, \ u := \begin{bmatrix}
1 \\ -1 \\ 1 \\0
\end{bmatrix}.
\]
Observe that $\range(\A^*)$ consists of all the matrices with entries $(1,3)$ and $(2,4)$ identically $0$.  Now if $Z$ is any exposing vector for $\FF$, we have $\bar{X}Z = 0$ and hence $Z = \lambda (vv^T) + \mu (uu^T)$ for some $\lambda, \mu \in \R$.  But since $uu^T \notin \range(\A^*)$, it follows that $\rank(Z) \le 1$ and $\sd(\FF) \ge 2$.  We conclude this example by summarizing our observations:
\[
\sd(\FF) = \begin{cases}
1, \quad & \theta \in [0,\pi),\\
\ge 2, &\theta =\pi.
\end{cases}
\]
\end{ex}

Some of the observations of this example extend to general $n \ge 4$.  First we show that the partial matrix admits a unique positive semidefinite completion, which is Toeplitz.

\begin{prop}
\label{prop:cosn}
Consider the partial symmetric $n\times n$ Toeplitz matrix with pattern $P=\{ 1, n-1\}$ and data 
\[
\{ t_0, t_1 , t_{n-1} \} = \{ 1, \cos \theta , \cos((n-1)\theta) \} , 
\] 
where $\theta \le \frac{\pi}{n-1}$.
Then the unique positive semidefinite completion is 
$$ \left( \cos \left((i-j)\theta \right) \right)_{i,j=0}^{n-1} = B^T B  , $$ where 
$$ B= \begin{pmatrix} 1 & \cos \theta & \cos (2\theta )& \cdots & \cos ((n-1)\theta) \cr 0 
& \sin\theta& \sin (2\theta ) & \cdots & \sin ((n-1)\theta) \end{pmatrix}. $$
\end{prop}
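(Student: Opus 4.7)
The plan is to exhibit the matrix $T := (\cos((i-j)\theta))_{i,j=0}^{n-1}$ as the Gram matrix $B^{T}B$ of points on the unit circle, and to obtain uniqueness from the equality case of the spherical triangle inequality applied to a Gram-matrix factorization of an arbitrary positive semidefinite completion.

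For existence, the cosine subtraction formula $\cos((i-j)\theta) = \cos(i\theta)\cos(j\theta) + \sin(i\theta)\sin(j\theta)$ is exactly the $(i,j)$-entry of $B^{T}B$, so $T = B^{T}B$ is positive semidefinite of rank at most $2$. Its diagonal entries are $1$, its $(i,i\pm 1)$ entries are $\cos\theta$, and its $(0,n-1)$ entry is $\cos((n-1)\theta)$, so $T$ is indeed a completion of the given partial Toeplitz matrix.

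For uniqueness, let $X$ be any positive semidefinite completion and write $X = Y^{T}Y$ with columns $y_0, \dotsc, y_{n-1}$. The diagonal data gives $\|y_i\| = 1$, the sub/superdiagonal data gives $\langle y_i, y_{i+1}\rangle = \cos\theta$, i.e., geodesic distance $d(y_i,y_{i+1}) = \theta$ on the unit sphere, and the corner data gives $\langle y_0, y_{n-1}\rangle = \cos((n-1)\theta)$, which translates to $d(y_0, y_{n-1}) = (n-1)\theta$ precisely because the hypothesis $(n-1)\theta \in [0,\pi]$ places the angle in the range where it is determined by its cosine. The spherical triangle inequality then forces
\[
(n-1)\theta \;=\; d(y_0, y_{n-1}) \;\le\; \sum_{i=0}^{n-2} d(y_i,y_{i+1}) \;=\; (n-1)\theta,
\]
with equality throughout; this pins $y_0, \dotsc, y_{n-1}$ in order on a single length-minimizing geodesic, giving $d(y_i,y_j) = |i-j|\theta$ and hence $X_{ij} = \cos(|i-j|\theta) = T_{ij}$ for all $i,j$.

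The one delicate point, which I expect to be the main obstacle, is the endpoint case $\theta = \pi/(n-1)$, where $y_0$ and $y_{n-1}$ are antipodal and the length-minimizing geodesic between them is not unique. This is handled inductively: for each $k < n-1$ the pair $y_0, y_k$ has spherical distance $k\theta < \pi$, so the geodesic through them \emph{is} unique, and the equality case applied to the broken chain $y_0, y_1, \dotsc, y_k, y_{n-1}$ forces $y_{k+1}$ to be the unique point at distance $\theta$ beyond $y_k$ on that geodesic, reconstructing the same configuration of vectors $y_k = (\cos(k\theta), \sin(k\theta))$ up to an isometry as before.
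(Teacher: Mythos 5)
Your proof is correct, but it takes a genuinely different route from the paper's. The paper works entirely inside the completion framework: it writes the unknown entries as cosines $\cos\theta_k$, applies two results of Barrett--Johnson--Tarazaga on cycle completions (the $3\times 3$ angle inequality, and the cycle completability criterion $2\max_i\theta_i\le\sum_i\theta_i$) to the principal submatrix in rows $1$, $n-1$, $n$ and to the $(n-1)\times(n-1)$ corner, squeezes $\theta_{n-2}=(n-2)\theta$ from the two resulting inequalities, and then repeats the argument (induction on smaller submatrices) to fix each diagonal in turn. You instead factor an arbitrary completion as a Gram matrix $X=Y^TY$ and argue with the angular metric on the unit sphere: the iterated triangle inequality along the chain $y_0,\dots,y_{n-1}$ is forced to be tight, which pins every pairwise distance at $\lvert i-j\rvert\theta$ and hence every entry at $\cos(\lvert i-j\rvert\theta)$, all diagonals at once. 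The two arguments are close cousins---the $3\times 3$ inequality the paper cites is precisely the spherical triangle inequality you invoke---but yours is more self-contained (no appeal to the cycle criterion, no induction over submatrices), while the paper's stays within the completion results it already cites and needs no geometric reformulation.

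One remark on your ``delicate point'': the endpoint case $\theta=\pi/(n-1)$ does not actually require any discussion of geodesic uniqueness. Equality in the chain already forces, for every $k$, $d(y_0,y_k)=k\theta$ and $d(y_k,y_{n-1})=(n-1-k)\theta$ (each is bounded above by the corresponding partial sum, and the two bounds add up to $d(y_0,y_{n-1})$), and the same sandwich applied to $y_0,y_i,y_j,y_{n-1}$ gives $d(y_i,y_j)=\lvert i-j\rvert\theta$ for every pair. Since the completion is determined by the inner products $\cos d(y_i,y_j)$ alone, the non-uniqueness of minimizing geodesics between antipodal $y_0,y_{n-1}$ is irrelevant, and your inductive repair (which, as written, has a small index slip in the ``broken chain'') can simply be dropped.
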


\begin{proof} Let us denote the first column of a positive semidefinite completion by $\begin{pmatrix} \cos \theta_0 & \cos \theta_1 &  \cos \theta_2 & \cdots &  \cos \theta_{n-1} \end{pmatrix}^T$, where $\theta_0 = 0 , \theta_1 =\theta$, $\theta_{n-1} = (n-1)\theta$ and $\theta_2, \ldots , \theta_{n-2} \in [0,\pi ]$. If we look at the principal submatrix in rows and columns 1, $n-1$ and $n$, we get the positive semidefinite submatrix
$$ \begin{pmatrix} 1 & \cos (\theta_{n-2} ) & \cos((n-1)\theta ) \cr \cos( \theta_{n-2} ) & 1 & \cos (\theta ) \cr 
 \cos((n-1)\theta )  & \cos(\theta) & 1 \end{pmatrix} . $$ \cite[Proposition 2]{MR1236734} yields that $ (n-1)\theta \le \theta_{n-2} +\theta$. Thus 
 \begin{equation}\label{co} \theta_{n-2} \ge (n-2)\theta . \end{equation} 
Next, consider the $(n-1)\times (n-1)$ upper left corner with data 
\[
\{ t_0, t_1 , t_{n-2} \} =\{ 1, \cos \theta , \cos \theta_{n-2} \}.
\]
By \cite[Corollary 2]{MR1236734} we have that 
\begin{equation}\label{cor2} 2 \max \{ \theta_{n-2} , \theta \} \le  (n-2) \theta + \theta_{n-2}. \end{equation}
This implies that \begin{equation}\label{cor2a}\theta_{n-2} \le (n-2)\theta. \end{equation} Combining this with 
\eqref{co} we have $\theta_{n-2} = (n-2)\theta$. If instead we looked at the principal submatrix in rows and columns 1,2, and $m$ and combine it with the $(n-1)\times (n-1)$ lower right corner, we obtain that also in the $(n,2)$th position we necessarily have $\cos ((n-2)\theta )$. Thus along the $(n-2)$th diagonal the value is $\cos ((n-2)\theta )$.

One can repeat this argument for smaller matrices (or invoke induction) and obtain that in the $k$th diagonal necessarily all entries equal $\cos \theta_k=\cos (k\theta )$, $k=2,\ldots , n-2$.
%
\end{proof}

Now we show that in case $\theta = \frac{\pi}{n-1}$, we have $\rank(\bar{X}) + \rank(\bar{Z}) \le 3 < n$ for all $n \ge 4$.

\begin{ex}
\label{ex:generaln}
Let $n\ge 4$ and consider the $n\times n$ symmetric partial Toeplitz matrix with pattern $P=\{ 1, n-1\}$ and data $\{ t_0, t_1 , t_{n-1} \} = \{ 1+\alpha , \cos (\frac{\pi}{n-1}) , -1 \}$ where $\alpha > 0$.  As in Example~\ref{ex:4n}, we let $\FF$ denote the set of positive semidefinite completions when $\alpha = 0$ and we let $X(\alpha)$ denote the maximum determinant completion when $\alpha > 0$.  By Proposition~\ref{prop:cosn}, $\FF$ is the rank $2$ matrix
\[
\bar{X} := \left( \cos \left(\frac{(i-j)\pi}{n-1}\right) \right)_{i,j=0}^{n-1},
\]
and by Theorem~\ref{thm:Xalpha}, $\bar{X} = \lim_{\alpha \searrow 0} X(\alpha)$.  If $Z(\alpha) = \alpha X(\alpha)^{-1}$ and $\bar{Z}$ is the limit of $Z(\alpha)$ as $\alpha$ decreases to $0$, we show that $\rank(\bar{Z}) \le 1$.  Let $a,b,c,$ and $d$ be the limit points of $a(\alpha),b(\alpha),c(\alpha),$ and $d(\alpha)$ respectively.  By Proposition~\ref{Tinverse}, $Z(\alpha)$ is as in \eqref{eq:Zalpha}.  We claim that if $a =0$ then $\bar{Z} = 0$.  Indeed, by the fact that $\bar{Z}$ is positive semidefinite we have $c=d=0$.  Moreover, 
\[
0 = {\rm tr} (\bar{X}\bar{Z} ) = (n-2)b,
\]
which implies that $b = 0$ and consequently $\bar{Z} = 0$.  Thus we may assume $a>0$ and the equation
\[
b = \frac 1a (a^2 + c^2 - d^2),
\]
holds.  From $ \bar{X} \bar{Z} = 0$ and the above equation, we obtain
$$ 2\cos \left(\frac{\pi}{n-1}\right)c + b = 0 ,\ a + \cos \left(\frac{\pi}{n-1}\right)c -d = 0.$$
This gives $c=-b/(2\cos(\frac{\pi}{n-1})), \ d=a -\frac{b}{2}$, and thus
$$ b=a + \frac{1}{a} \frac{b^2}{4\cos^2(\frac{\pi}{n-1})} -\frac{1}{a} \left(a-\frac{b}{2}\right)^2.$$ 
After rearranging, we obtain
$$ \frac{b^2}{a} \left(\frac14 - \frac{1}{4\cos^2(\frac{\pi}{n-1})} \right)=0 . $$
Consequently $b=0$, and $\rank( \bar{Z}) \le 1$ follows.  

Numerical experiments suggest that $\bar{Z}$ is the rank 1 matrix with $(\bar{Z})_{11}=(\bar{Z})_{nn}=(\bar{Z})_{1n}=(\bar{Z})_{n1}= \frac{n-1}{4}$ and all other entries equal to 0.


\end{ex}
\def\cprime{$'$} \def\cprime{$'$} \def\cprime{$'$}
  \def\udot#1{\ifmmode\oalign{$#1$\crcr\hidewidth.\hidewidth
  }\else\oalign{#1\crcr\hidewidth.\hidewidth}\fi} \def\cprime{$'$}
  \def\cprime{$'$} \def\cprime{$'$} \def\cprime{$'$}

\end{document}